%
%
%
%
%
\RequirePackage{fix-cm}
\documentclass[smallextended]{svjour3}       
\smartqed  

\usepackage{amsmath, amsfonts, vmargin, enumerate}
\usepackage{amssymb}

\newcommand{\real}{{\mathbb R}}

\newcommand{\A}{{\mathcal A}}

\newcommand{\E}{{\mathcal E}}
\newcommand{\F}{{\mathcal F}}

\newcommand{\M}{{\mathcal M}}
\newcommand{\N}{{\mathcal N}}

\newcommand{\D}{{\mathcal D}}

\newcommand{\e}{\varepsilon}

\newcommand{\8}{\infty}

\newcommand{\be}{\begin{eqnarray*}}
\newcommand{\ee}{\end{eqnarray*}}
\newcommand{\beq}{\begin{equation}}
\newcommand{\eeq}{\end{equation}}
\newcommand{\beqn}{\begin{equation*}}
\newcommand{\eeqn}{\end{equation*}}
\newcommand{\bsp}{\begin{split}}
\newcommand{\esp}{\end{split}}

%
%
%
%
\begin{document}

\title{On products of noncommutative symmetric quasi Banach spaces and applications
}


\author{Turdebek N. Bekjan         \and
        Myrzagali N. Ospanov 
}


\institute{T. N. Bekjan  \at
              College of Mathematics and Systems Science, Xinjiang
University, Urumqi 830046, China \\
                           \email{bekjant@yahoo.com \\ bek@xju.edu.cn}           
           \and
           M. N. Ospanov \at
              Faculty of Mechanics and Mathematics, L. N. Gumilyov Eurasian National University, Nur-Sultan 010008, Kazakhstan\\
                \email{myrzan66@mail.ru}
}

\date{Received: date / Accepted: date}

\maketitle

\begin{abstract}

Let $E_1,\;E_2$ be symmetric quasi Banach function spaces on $(0,\alpha)\;(0<\alpha\le\8)$. We study some properties of several constructions (the products $E_1(\M)\odot E_2(\M)$, the Calder$\rm\acute{o}$n spaces $E_1(\M)^\theta E_2(\M)^{1-\theta}$, the complex interpolation spaces $(E_1(\M),E_2(\M))_\theta$, the real interpolation method $(E_1(\M),E_2(\M))_{\theta,p}$) in the context of noncommutative symmetric quasi Banach spaces. Under some natural assumptions, we prove
$$
(E_1(\M), E_2(\M))_\theta=E_1(\M)^\theta E_2(\M)^{1-\theta}=E_1^{(\frac{1}{\theta})}(\M)\odot E_2^{(\frac{1}{1-\theta})}(\M)\;(0<\theta<1).
$$
As application, we  extend these result to  the noncommutative symmetric quasi Hardy spaces case. We also obtained the real case of Peter Jones' theorem   for  noncommutative symmetric quasi Hardy spaces.
\keywords{
Symmetric quasi Banach function space\and Pointwise product of symmetric quasi Banach function spaces\and Noncommutative symmetric quasi Banach space\and  Noncommutative symmetric quasi Hardy space   \and Complex and real interpolation.}
\subclass{46L52 \and 47L51}
\end{abstract}


\section{Introduction}

The factorization of Banach function spaces has been intensively developed by many authors during last decades. The Lozanovski$\rm\breve{i}$ factorization theorem (see \cite{Lo}) has a lot of important generalizations in which the spaces of pointwise products $E \odot F$ and the spaces of pointwise multipliers $M (E,F)$ play a crucial role (see \cite{KLM} for more bibliography, also see \cite{Sch}).

 In \cite[Theorem1]{KLM}, the authors proved that for symmetric Banach function spaces, $E_1\odot E_2$  is the $\frac{1}{2}$-concavification of the Calder$\rm\acute{o}$n space $E_1^{1/2}E_2^{1/2}$, i.e.,
  $$
  E_1\odot E_2=(E_1^{1/2}E_2^{1/2})^{(\frac{1}{2})}
  $$
 (see \cite{Ca} for definition of the Calder$\rm\acute{o}$n space $E_1^{1-\theta}E_2^{\theta}\;(0<\theta<1)$). This relationship also holds for the symmetric quasi Banach function spaces case (It can be done exactly the same way as in \cite[Theorem1]{KLM}).

Let $E_1$ and $E_2$ be  symmetric Banach function spaces on $(0,\alpha)$  and $0<\theta<1$. From  Theorem IV.1.14 of \cite{KPS}
(see also \cite{Ca} ) it follows that if $E_1^{1-\theta}E_2^\theta$ has order continuous norm,  then
\be
(E_1,E_2)_\theta=E_1^{1-\theta}E_2^\theta
\ee
holds with equality of norms. Under some  assumptions, this result holds for the symmetric quasi Banach function  spaces case (cf. \cite[Theorem 3.4]{KM} and \cite[Theorem 7.9]{KMM}).

The first goal of the paper is to study the products $E_1(\M)\odot E_2(\M)$, the Calder$\rm\acute{o}$n spaces $E_1(\M)^\theta E_2(\M)^{1-\theta}$, the complex interpolation spaces $(E_1(\M),E_2(\M))_\theta$ and the relations between them for the case of quasi Banach
function spaces. We prove if  $E_j$ is a symmetric quasi Banach function   space on $(0,\alpha)$ with order continuous norm which is
$s_j$-convex for some $0 < s_j < \8$, $j=1,\;2$ and $0<\theta<1$, then
\beq\label{eq:interpolation-calderon-product}
(E_1(\M),E_2(\M))_\theta=E_1(\M)^\theta E_2(\M)^{1-\theta}=E_1^{(\frac{1}{\theta})}(\M)\odot E_2^{(\frac{1}{1-\theta})}(\M),
\eeq
where $\mathcal{M}$ is a semifinite  von Neumann algebra with a faithful normal semifinite  trace
$\tau$ satisfying $\tau(1)=\alpha$. Applying  \eqref{eq:interpolation-calderon-product},  we give similar result for the case noncommutative symmetric quasi Hardy spaces associated with a finite
subdiagonal algebra in Arveson's sense \cite{A}. Let $\M$ be a finite von Neumann algebra  with a faithful normal finite  trace
$\tau$ satisfying $\tau(1)=\alpha$, and let $\A$ be a subdiagonal algebra of $\M$.  If $E_j$ is  a symmetric quasi Banach function space on $(0,\alpha)$ with order continuous norm which is
$s_j$-convex for some $0 < s_j < \8$ $(j=1,\;2)$ and
 $q_{E_1},\; q_{E_2}<\8$, then
\beq\label{eq:interpolation-calderon-hardy}
(E_1(\A),E_2(\A))_\theta=E_1(\A)^\theta E_2(\A)^{1-\theta}=E_1^{(\frac{1}{\theta})}(\A)\odot E_2^{(\frac{1}{1-\theta})}(\A)\qquad (0<\theta<1)
\eeq
with equivalent norms.

 Pisier\cite{P} gave a new proof of the interpolation theorem
of Peter Jones (see \cite{Jo} or \cite{BS}, p.414). His method does to extend to the noncommutative case and the case of Banach space valued $H^p$-spaces  (see $\S2$ and $\S3$ in \cite{P}). In \cite{PX}, Pisier and Xu obtained
noncommutative version of P. Jones' theorem  for noncommutative Hardy spaces $H_p(\A)$ associated with a finite
subdiagonal algebra. The noncommutative version of  Peter Jones' theorem   for  noncommutative Hardy spaces  $H_p(\A)$  associated with semifinite von Neumann algebras holds  (for the real case see \cite{B,STZ}, for complex case see \cite{BO}). We use  \eqref{eq:interpolation-calderon-hardy} and Pisier's method to prove the real case of Peter Jones' theorem   for noncommutative symmetric quasi Hardy spaces.

The paper is organized as follows.
In Section $2$ some necessary definitions and notations are collected including the symmetric quasi Banach function spaces,
noncommutative symmetric quasi Banach spaces and  interpolations.
In Section $3$ the  pointwise product of symmetric quasi Banach function spaces is defined and the \eqref{eq:interpolation-calderon-product} is presented. Section 4 is devoted to complex interpolation of the noncommutative symmetric quasi Hardy spaces, the \eqref{eq:interpolation-calderon-hardy} is proved. In Section 5, we proved the real case of Peter Jones' theorem   for noncommutative symmetric quasi Hardy spaces.

In what follows, $A,\;B$ and $C$ always denote  constants, which may be different in different places.

\section{Preliminaries}\label{pre}

Let $(\Omega,\Sigma, m)$ be  a complete $\sigma$-finite measure space. We denote by $L_0(\Omega)$ the space of $\mu$-measurable real-valued functions defined  on $\Omega$. The nonincreasing  rearrangement function $f^*: [0, \8) \mapsto [0, \8]$ for $f \in L_0 (\Omega)$ is defined by
$$
f^*(t) = \inf\{s > 0 : \; \mu ( \{\omega \in \Omega :\; |f (\omega)| > s\}) \le t\}
$$
for $t \ge 0.$ If $f, g \in L_0(\Omega)$ such that $\int_{0}^{t} f^* (s) ds \le \int_{0}^{t}g^* (s) ds$ for all $t \ge 0,$ $f$ is said to be {\it majorized} by $g,$ denoted by $f \preccurlyeq g.$

Let $E$ be a (quasi) Banach space of functions in $L_0(\Omega)$. If from  $g\in E,\;f\in L_0(\Omega)$ and $|f(t)|\le|g(t)|$ for $m$-almost all $t\in\Omega$  follows that  $f\in E$ and $\|f\|_E\le\|g\|_E$, then we say $E$ is  a (quasi) Banach ideal space on $\Omega$.

Let $0<\alpha\le\8$. If $E$ is  a (quasi) Banach ideal space on  $(0,\alpha)$ and satisfying the following
 properties: if  $f\in E$ and $g\in L_0(0, \alpha)$ such that $g^* (t) \le f^*(t)$ for all $t \ge 0$,  implies that $g\in E$ and $\|g\|_{E}\le\|f\|_{E}$,
 then $E$ is called  a symmetric  (quasi) Banach function space on $(0,\alpha)$.

 For   $0<p<\infty,\; E^{(p)}$ will denote the symmetric
quasi Banach function space defined by
$$
E^{(p)}=\{f:\;|f|^{p}\in E\},
$$
equipped with the quasi norm
$$
\|f\|_{E^{(p)}}=\||f|^{p}\|_{E}^{\frac{1}{p}}.
$$
 Observe that, if $0<p,\;q < \8$, then
 \beq\label{eq:pawerp-q}
 (E^{(p)})^{(q)} = E^{(pq)}.
 \eeq
  It is to be noted that, if $E$ is a symmetric Banach function space and $p > 1$, then the space $E^{(p)}$ is a symmetric Banach function space
and is usually called the $p$-convexification of $E$ (see \cite[Proposition 3.1]{DDS} or see \cite{ORS,X}).

 A symmetric  (quasi ) Banach function space $E$ on $(0,\alpha)$ is called fully symmetric if, in addition, for $f \in L_0(0,\alpha)$ and $g \in E$
with $f\preccurlyeq g$ it follows that $f \in E$ and $\|f\|_E \le \|g\|_E$. $E$ is said to have a Fatou (quasi) norm if for every net $(f_{i})_{i\in I}$ in $E$ and $f\in E$ satisfying $0\le f_i\uparrow f$ we have $\|f_i\|_E \uparrow\|f\|_E$; $E$ is said to have the Fatou property  if, for every net $(f_{i})_{i\in I}$ in $E$ satisfying $0\le f_i\uparrow$ and $\sup_{i\in I}\|f_i\|_E < \infty,$ the supremum $f =\sup_{i\in I} f_i$ exists in $E$ and $\|f_i\|_E\uparrow\|f\|_E$;
If for every net $(f_{i})_{i\in I}$ in $E$ such that $f_i\downarrow0$,  $\|f_i\|_E\downarrow0$ holds, then we call $E$ has order continuous norm.

The K$\rm\ddot{o}$the dual of a symmetric Banach function space $E$ on $(0,\alpha)$ is the symmetric Banach function
space $E^{\times}$ given by
$$
E^{\times} =\left\{g\in  L_0(0,\alpha):\; sup\{\int_{0}^\alpha
|f(t)g(t)| dt\; : \|f\|_E \le 1\}< \infty\right\};
$$
$$
\|g\| = sup\{\int_{0}^\alpha
|f(t)g(t)| dt\; : \|f\|_E \le 1\},\quad
g \in  E^{\times}.
$$

Let $E$ be a symmetric Banach function space on $(0,\alpha)$. Then   $E$  has order continuous norm if and only if
it is separable, which is also equivalent to the statement $E^{*} = E^{\times}$ (see \cite[Chapter1, Corollary 4.3 and 5.6]{BS}).  Moreover, if $E$  is a
 separable symmetric quasi Banach function space, then  $E$ has  the Fatou property and  fully symmetric.

Let $0<s,t<\infty$. If there exists a
constant $C>0$ such that for all finite sequence $(f_{n})_{n\ge1}$ in $E$
$$
\begin{array}{c}
  \|(\sum|f_{n}|^s)^{\frac{1}{s}}\|_E\le C (\sum\|f_{n}\|_E^s)^{\frac{1}{s}}\\
(\mbox{resp}.\;\|(\sum|f_{n}|^{t})^{\frac{1}{t}}\|_E\ge C^{-1}
(\sum\|f_{n}\|_E^{t})^{\frac{1}{t}}),
\end{array}
$$
then $E$ is called  $s$-convex (resp. $t$-concave).
The least such constant $C$ is called the $s$-convexity  (resp.
$t$-concavity) constant of $E$ and is denoted by $M^{(s)}(E)$ (resp.
$M_{(t)}(E)$). If $E$ is $s$-convex and $s$-concave
then  $E^{(p)}$ is $ps$-convex and $ps$-concave with
$M^{(ps)}(E^{(p)})= M^{(s)}(E)^{\frac{1}{p}}$ and
$M_{(pt)}(E^{(p)})= M_{(t)}(E)^{\frac{1}{p}}$ (see \cite[Proposition 3.1]{DDS}). Therefore, if $E$ is
$s$-convex  then  $E^{(\frac{1}{s})}$ is $1$-convex, so it
can be renormed as a Banach lattice (see \cite[Proposition 1.d.8]{LT} and \cite[p. 544]{X}).

For any $0 < a <\8$, let the dilation operator $D_a$ on $L_0(0,\alpha)$ defined
by
$$(D_af)(\tau) =\left\{\begin{array}{ll}
f(\frac{\tau}{a})&\quad\mbox{if}\;\frac{\tau}{a}<\alpha,\\
0&\quad\mbox{if}\;\frac{\tau}{a}\ge\alpha,
\end{array}
\right.$$
for $\tau \in (0,\alpha)$. For a symmetric quasi Banach function space $E$ on $(0,\alpha)$, if $f\in E$,  then $D_af\in E$ and there exists a constant $K(a)$ such
that $\|D_af\|_E \le K(a)\|f\|_E$ for all $f\in E$ (cf. \cite[Lemma 2.2]{D}).  If $E$ is a Banach function space, then the constant $K(a)$ can be taken to be
$max\{1,a\}$ (see \cite{KPS}). The lower and upper Boyd indices $p_E$ and $q_E$ of $E$ are respectively defined by
$$
p_E = \sup_{a >1} \frac{\log a}{\log \|D_a \|_E}\quad \text{and}\quad
q_E = \inf_{0 < a <1} \frac{\log a}{\log \|D_a \|_E}.
$$
Note that $0< p_E \le q_E \le\8$ (cf. \cite[Lemma 2.2]{D}). If $E$ is a symmetric Banach function space, then $1 \le p_E \le q_E \le\8$ (cf. \cite[Proposition 2.b.2]{LT}).
If $E$ is $s$-convex then $p_E\ge s$ and if $E$ is $t$-concave
then $q_E\le t$. The proof of these facts can be done exactly the same way as in the particular case where $E$ is a Banach function
space (see \cite[p.132]{LT}).

We shall need the following duality for Boyd indices (see \cite[Theorem II.4.11]{KPS}). Let $E$ be a symmetric Banach function space on $(0,\alpha)$  with the Fatou norm. Then
\beq\label{boydindex}
\frac{1}{p_E}+\frac{1}{q_{E^\times}}= 1,\quad \frac{1}{p_{E^\times}}+\frac{1}{q_E}= 1.
\eeq
For more details on symmetric (quasi) Banach function space we refer to \cite{BS,DDS,KPS,LT}.

We use standard notation in  theory of noncommutative
$L^{p}$-spaces,  our main references are \cite{FK,PX} (see
also \cite{PX} for more  historical references). Let $\mathcal{M}$ be a semifinite  von Neumann algebra on
the Hilbert space $\mathcal{H}$ with a faithful normal semifinite  trace
$\tau$ satisfying $\tau(1)=\alpha$. The set of all $ \tau$-measurable operators will be
denoted by $L_0(\M)$. On $L_0(\M)$, we define sum ( respectively, product) by closure of the
algebraic sum (respectively, the
algebraic product), then $L_0(\M)$ is a
$\ast$-algebra.

 Let $x\in L_0(\M)$.  For $t>0$, we define
$$
\lambda_{t}(x)=\tau(e_{(t,\infty)}(|x|)),
$$
where $e_{(t,\infty)}(|x|)$ is the spectral projection of $|x|$ corresponding
to the interval $(t,\infty)$, and
 $$  \mu_t(x)=\inf\{s>0\;:\; \lambda_s(x)\le t\}.$$
The functions $t\mapsto \lambda_t(x)$ and $t\mapsto \mu_t(x)$ are called the
distribution function and  the   generalized singular
numbers  of $x$  respectively. We  denote simply by $\lambda (x)$ and  $\mu(x)$, the
distribution function and  the   generalized singular
numbers  of $x$  respectively. It is easy to check that $\mu_{t}(x)=0$, for all $t\ge\tau(1)$.

For more details on generalized singular
numbers  of measurable operators we refer to
\cite{FK}.

Let  $E$ be a symmetric quasi Banach function space on
$(0,\alpha)$. We define
$$
E(\M)=\{x  \in L_0(\M) :\;\mu(x)\in
E\};
$$
$$
\|x\|_{E}=\|\mu(x)\|_{E}, \quad \quad x
\in E(\M).
$$
Then $(E(\M),\|.\|_E)$ is a  quasi Banach
space and we call $(E(\M),\|.\|_E)$  a noncommutative symmetric quasi Banach space (cf. \cite{DDP1,Pa,S,X}).

Let $(X_0 ,X_1)$ be an interpolation couple of quasi Banach spaces, i.e. $X_j,\; j = 0,\;1$ are continuously embedded into a larger topological vector space $Y$, and
$X_0 \cap X_1$ is dense in $X_j,\; j = 0,\;1$.
Let $S$ (respectively, $\overline{S}$ ) denote the open strip $\{z:\;0<\mathrm{Re}z<1\}$ (respectively, the closed strip $\{z:\;0\le \mathrm{Re}z\le1\}$ ) in the complex plane $\mathbb{C}$.
Let $\F(X_0, X_1)$ be   the space of bounded, analytic functions $f : S \rightarrow X_0 + X_1$ which extend continuously to $\overline{S}$ such that the traces $t\rightarrow f(j+it)$ are bounded continuous functions into $X_j,\; j = 0,\;1$. We equip $\F(X_0, X_1)$ with the norm:
 $$
 \big\|f\big\|_{\F(X_0, X_1)}=\max\big\{
 \sup_{t\in\real}\big\|f(it)\big\|_{X_0}\,,\;
 \sup_{t\in\real}\big\|f(1+it)\big\|_{X_1},\;\sup_{z\in S}\big\|f(z)\big\|_{X_0+X_1}\big\}.
 $$
Then  $\F(X_0, X_1)$ becomes
a quasi Banach  space.   We define the complex interpolation space $(X_0,X_1)_\theta \; (0<\theta<1)$
by $x\in (X_0,X_1)_\theta$ if and only if $x\in\F(X_0, X_1)(\theta)$ and
 $$\|x\|_{(X_0,\; X_1)_\theta}=\inf\big\{\big\|f\big\|_{\F(X_0, X_1)}\;:\; f(\theta)=x,\;
 f\in\F(X_0, X_1)\big\}.$$
It then follows that $ (X_0,X_1)_\theta$ is a quasi Banach space for $0<\theta<1$ (see \cite{KM}).

It should be pointed out that the third term is superfluous when  the maximum modulus
principle  applies. It is well-known that if $(X_0, X_1)$ is a compatible couple of complex Banach spaces, then the definition of the interpolation spaces here coincides with that of \cite{Ca}.

  However, the maximum  principle  fails for some quasi
Banach spaces (see \cite{Pe1}), but there
is an important subclass of quasi Banach spaces called analytically convex in \cite{K} in which
the maximum modulus principle does hold. A quasi Banach space $X$ is analytically convex if there is a constant $C$ such that for every
polynomial $P:\mathbb{C}\rightarrow X$ we have $\|P(0)\|_X \le C\max_{|z|=1} \|P(z)\|_X $.  The analytically convex property is
equivalent to the condition that
\beq\label{eq:kalton}
\sup_{z\in S}\big\|f(z)\big\|_{X}\le C \max\{
 \sup_{t\in\real}\big\|f(it)\big\|_{X}\,,\;
 \sup_{t\in\real}\big\|f(1+it)\big\|_{X}\}
\eeq
for any analytic function $f: S\rightarrow X$ which is continuous on the closed strip
$\overline{S}$ (see \cite{KMM}).

Let us recall  the real interpolation method.  Let $X_0, X_1$ be a compatible couple of quasi Banach  spaces.
For all $x \in X_0+X_1$ and for all $t>0$, we let
$$
K_t (x, X_0, X_1)=\inf
\{\|x_0\|_{X_0}+t\|x_1\|_{X_1}:\; x = x_0 + x_1,\;x_0\in X_0,\; x_1\in X_1 \}.
$$
For all $x \in X_0\cap X_1$ and for all $t>0$, we let
$$
J_t (x, X_0, X_1)=\max
\{\|x\|_{X_0},\;t\|x\|_{X_1}\}.
$$
Recall that the (real interpolation) space $(X_0, X_1)_{\theta, p}$ is defined as the space of all $x$ in
$X_0+X_1$ such that $\|x\|_{\theta, p} < \8$, where
$$
\|x\|_{\theta, p}  = (\int_0^\8(t^{-\theta}K_t (x, X_0, X_1))^p\frac{dt}{t})^{\frac{1}{p}}
$$
($0<\theta<1$, $0<p\le\8$).
Recall that there is a parallel definition $(X_0, X_1)_{\theta, p}$ using the $J_t$ functional which
leads to the quasi same Banach space with an equivalent norm. See \cite{BS,BeL} for more information on $K_t$ and $J_t$  functionals.

\section{On product of symmetric quasi Banach spaces}

If $E$  is  a (quasi) Banach ideal space on  $(0,\alpha)$, we  set
$$
E^+=\{f:\;f\in E,\;f\ge0\;\mbox{ a.e. on}\; (0,\alpha)\},\quad B(E)=\{f:\;f\in E,\;\|f\|_E\le1\}.
$$

Let  $E_{i}$  be  a (quasi) Banach ideal space on
$(0,\alpha),\; i=1, 2$. We define the pointwise product space $E_1\odot E_2$ as
\begin{equation}\label{multiplicativityspace}
    E_{1}\odot E_{2}=\{f:\;f=f_{1}f_{2},\;f_{i}\in E_{i},\;i=1,\;2\}
\end{equation}
with a functional $\|f\|_{E_1\odot E_2}$ defined by
$$
\|f\|_{E_1\odot E_2}=\inf\{\|f_{1}\|_{E_1}\|f_{2}\|_{E_2}:\;f=f_{1}f_{2},\;f_{i}\in E_{i},\;i=1,\;2\}.
$$
Similar to Proposition 1 in \cite{KLM}, for any $f\in E_1\odot E_2$,  we deduce that
\beq\label{eq:product-norm}
 \|f\|_{E_1\odot E_2} =\inf\{\|f_{1}\|_{E_1}\|f_{2}\|_{E_2}:\;|f|\le f_{1}f_{2},,\;f_{i}\in E_{i}^+,\;i=1,\;2\}.
\eeq

Let  $E_{i}$ be a (quasi) Banach ideal space on
$(0,\alpha),\; i=1,\; 2$, and let $0<\theta<1$. We define the Calder$\rm\acute{o}$n space $E_1^{1-\theta} E_2^{\theta}$ as
\be
E_1^{1-\theta} E_2^{\theta}=\{f:\;|f|\le \lambda|f_1|^{1-\theta} |f_2|^{\theta},\;\lambda>0,\;\;f_{i}\in B(E_{i}),\;i=1,\;2\}.
\ee
We equip this space with the  functional
 $$
 \|f\|_{E_1^{1-\theta} E_2^{\theta}}= \inf \{\lambda \}
 $$
 where the
infimum is over all such representations.

If   $E_1,\;E_2$ are symmetric   Banach  function spaces on  $(0,\alpha)$,
by  \cite[Lemma II.4.3]{KPS},  $ E_1^{1-\theta} E_2^{\theta}$ is again a symmetric Banach function
space on $(0,\alpha)$.   This result also holds for quasi Banach ideal spaces. We first note that it
follows directly  the same way as in \cite[p. 243-244]{KPS}  that we have:

\begin{proposition}\label{pro:calderon}
Let  $E_1$ and $E_2$ be   a quasi Banach ideal spaces on
$(0,\alpha)$, and let $0<\theta<1$. Then the Calder$\rm\acute{o}$n space $E_1^{1-\theta} E_2^{\theta}$ is a quasi Banach ideal space on
$(0,\alpha)$.
\end{proposition}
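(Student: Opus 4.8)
The plan is to verify, in turn, the defining properties of a quasi Banach ideal space for $E_1^{1-\theta}E_2^{\theta}$: homogeneity and the ideal property, positive definiteness, the quasi-triangle inequality, and completeness. Homogeneity is clear, since for $c\ne0$ the set of admissible constants $\lambda$ in the definition of $\|cx\|_{E_1^{1-\theta}E_2^{\theta}}$ is $|c|$ times that for $x$. The ideal property is immediate: if $|x|\le|y|$ and $|y|\le\lambda|y_1|^{1-\theta}|y_2|^{\theta}$ with $y_i\in E_i$, then the same representation bounds $|x|$, so $x\in E_1^{1-\theta}E_2^{\theta}$ and $\|x\|_{E_1^{1-\theta}E_2^{\theta}}\le\|y\|_{E_1^{1-\theta}E_2^{\theta}}$; in particular the functional depends only on $|x|$.

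For positive definiteness, suppose $\|x\|_{E_1^{1-\theta}E_2^{\theta}}=0$. By \eqref{eq:calderon-norm} there are $x_1^{(n)}\in E_1^+$ and $x_2^{(n)}\in E_2^+$ with $|x|=(x_1^{(n)})^{1-\theta}(x_2^{(n)})^{\theta}$ and $\|x_1^{(n)}\|_{E_1}^{1-\theta}\|x_2^{(n)}\|_{E_2}^{\theta}\to0$; after the substitution $(x_1^{(n)},x_2^{(n)})\mapsto(s_nx_1^{(n)},s_n^{-(1-\theta)/\theta}x_2^{(n)})$, which leaves $|x|$ unchanged, with $s_n$ chosen to equalize the two norms, we may assume $\|x_1^{(n)}\|_{E_1}\to0$ and $\|x_2^{(n)}\|_{E_2}\to0$. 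The arithmetic--geometric mean inequality gives $|x|\le(1-\theta)x_1^{(n)}+\theta x_2^{(n)}$, and the right-hand side tends to $0$ in $E_1+E_2$, hence in $L_0(0,\alpha)$, because each $E_i$---and therefore $E_1+E_2$---is continuously embedded in $L_0(0,\alpha)$; a nonnegative function dominated by a sequence converging to $0$ in measure is $0$ a.e., so $x=0$. This same estimate shows that $E_1^{1-\theta}E_2^{\theta}$ is continuously embedded in $L_0(0,\alpha)$, a fact used in the completeness argument below.

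The quasi-triangle inequality is the structural core. It rests on the elementary pointwise inequality $a_1^{1-\theta}b_1^{\theta}+a_2^{1-\theta}b_2^{\theta}\le(a_1+a_2)^{1-\theta}(b_1+b_2)^{\theta}$ for $a_i,b_i\ge0$, which is Young's inequality applied termwise and needs no convexity, so it remains valid in the quasi Banach setting (it is the elementary ingredient behind Lemma~2.4.3 of \cite{KPS} in the Banach case). Given $x,y\in E_1^{1-\theta}E_2^{\theta}$ and $\e>0$, use \eqref{eq:calderon-norm} to pick representations $|x|=x_1^{1-\theta}x_2^{\theta}$ and $|y|=y_1^{1-\theta}y_2^{\theta}$ with $\|x_1\|_{E_1}^{1-\theta}\|x_2\|_{E_2}^{\theta}\le(1+\e)\|x\|_{E_1^{1-\theta}E_2^{\theta}}$ and likewise for $y$. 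Since $|x|=(sx_1)^{1-\theta}(s^{-(1-\theta)/\theta}x_2)^{\theta}$ for every $s>0$ and similarly for $y$, the pointwise inequality gives $|x+y|\le z_1^{1-\theta}z_2^{\theta}$ with $z_1=sx_1+ty_1\in E_1^+$ and $z_2=s^{-(1-\theta)/\theta}x_2+t^{-(1-\theta)/\theta}y_2\in E_2^+$, so $\|x+y\|_{E_1^{1-\theta}E_2^{\theta}}\le\|z_1\|_{E_1}^{1-\theta}\|z_2\|_{E_2}^{\theta}$. Bounding $\|z_i\|_{E_i}$ by the quasi-triangle inequality of $E_i$ (with modulus $C_i$) and then choosing $s,t$ so that the two summands in each factor coincide gives $\|x+y\|_{E_1^{1-\theta}E_2^{\theta}}\le C_1^{1-\theta}C_2^{\theta}(1+\e)\bigl(\|x\|_{E_1^{1-\theta}E_2^{\theta}}+\|y\|_{E_1^{1-\theta}E_2^{\theta}}\bigr)$; letting $\e\to0$ yields the quasi-triangle inequality with modulus $C_1^{1-\theta}C_2^{\theta}$ (the case $\|x\|=0$ or $\|y\|=0$ being covered by positive definiteness).

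For completeness, by the Aoki--Rolewicz theorem fix $p\in(0,1]$, taken small enough that it is also at most the $p$-norm exponents of $E_1$ and $E_2$, for which $\|\cdot\|_{E_1^{1-\theta}E_2^{\theta}}$ is equivalent to a $p$-norm; it then suffices to show that $\sum_nx_n$ converges in $E_1^{1-\theta}E_2^{\theta}$ whenever $\sum_n\|x_n\|_{E_1^{1-\theta}E_2^{\theta}}^{p}<\infty$. Discarding the vanishing terms, pick $x_{i,n}\in E_i^+$ with $|x_n|=x_{1,n}^{1-\theta}x_{2,n}^{\theta}$ and $\|x_{1,n}\|_{E_1}^{1-\theta}\|x_{2,n}\|_{E_2}^{\theta}\le2\|x_n\|_{E_1^{1-\theta}E_2^{\theta}}$, and set $s_n=(\|x_{2,n}\|_{E_2}/\|x_{1,n}\|_{E_1})^{\theta}$, so that $\|s_nx_{1,n}\|_{E_1}$ and $\|s_n^{-(1-\theta)/\theta}x_{2,n}\|_{E_2}$ are both at most $2\|x_n\|_{E_1^{1-\theta}E_2^{\theta}}$. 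Because $p$ is at most the exponent of each $E_i$, the series $\sum_ns_nx_{1,n}$ and $\sum_ns_n^{-(1-\theta)/\theta}x_{2,n}$ converge absolutely in the complete spaces $E_1$ and $E_2$; writing $u_N,v_N$ for their tails $\sum_{n>N}$, we have $u_N\to0$ in $E_1$, $v_N\to0$ in $E_2$, and, by the lattice form of the pointwise inequality, $\sum_{n>N}|x_n|\le u_N^{1-\theta}v_N^{\theta}$ with $\|u_N^{1-\theta}v_N^{\theta}\|_{E_1^{1-\theta}E_2^{\theta}}\le\|u_N\|_{E_1}^{1-\theta}\|v_N\|_{E_2}^{\theta}\to0$. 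As this also controls the $L_0(0,\alpha)$-distance, the partial sums $S_N=\sum_{n\le N}x_n$ are Cauchy in $L_0(0,\alpha)$ and converge there to some $x$; then $|x-S_N|\le\sum_{n>N}|x_n|\le u_N^{1-\theta}v_N^{\theta}$ forces $x-S_N\in E_1^{1-\theta}E_2^{\theta}$ with $\|x-S_N\|_{E_1^{1-\theta}E_2^{\theta}}\to0$, so $x\in E_1^{1-\theta}E_2^{\theta}$ and $S_N\to x$ there. I expect this completeness step---coordinating the lattice H\"older inequality with the balanced weights $(s_n)$ and the $p$-norm structure of all three spaces simultaneously, and then transporting convergence through $L_0(0,\alpha)$---to be the main obstacle; the quasi-triangle inequality, being in essence the classical Calder\'on computation with Young's inequality in place of convexity, should go through without difficulty.
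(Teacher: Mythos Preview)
Your argument is correct and follows the same overall strategy as the paper: the pointwise H\"older inequality $\sum a_k^{1-\theta}b_k^{\theta}\le(\sum a_k)^{1-\theta}(\sum b_k)^{\theta}$ drives both the quasi-triangle inequality and the completeness step, with Aoki--Rolewicz supplying a common $p$-norm exponent and the $p$-summability criterion handling completeness. The one organizational difference is that the paper first proves the reformulation
\[
\|x\|_{E_1^{1-\theta}E_2^{\theta}}=\inf\bigl\{\max\{\|x_1\|_{E_1},\|x_2\|_{E_2}\}:\ |x|=|x_1|^{1-\theta}|x_2|^{\theta}\bigr\}
\]
and uses this ``max'' form throughout, whereas you obtain the same effect by explicit rescalings $(x_1,x_2)\mapsto(sx_1,s^{-(1-\theta)/\theta}x_2)$ inline; these are two presentations of the same idea. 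Your version yields the explicit quasi-triangle modulus $C_1^{1-\theta}C_2^{\theta}$, which the paper does not record (it shows instead that the norm is a $p$-norm for the common $p$ of $E_1,E_2$), and your positive-definiteness step via the AM--GM bound $|x|\le(1-\theta)x_1^{(n)}+\theta x_2^{(n)}$ in $E_1+E_2\hookrightarrow L_0$ is a slight variant of the paper's argument, which instead lets each factor tend to $0$ in measure separately. Neither difference is substantive.
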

The whole proof of \cite[Theorem 1]{KLM} works for quasi Banach
ideal spaces, hence, we have  the following:
\begin{proposition}\label{thm:product-calderon}
Let  $E_{i}$ be a symmetric quasi  Banach function space on
$(0,\alpha),\; i=1,\; 2$. If $0<\theta<1$ and $0<p<\8$, then
 \begin{enumerate}[\rm(i)]
            \item $E_1^{1-\theta} E_2^{\theta}=E_1^{(\frac{1}{1-\theta})}\odot E_2^{(\frac{1}{\theta})}$,
          \item $ (E_{1}\odot E_{2})^{(p)}=E_1^{(p)}\odot E_2^{(p)}$,
          \item $ E_{1}\odot E_{2}=(E_1^\frac{1}{2} E_2^\frac{1}{2})^{(\frac{1}{2})}$.
 \end{enumerate}
\end{proposition}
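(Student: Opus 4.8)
The plan is to prove the three identities in order, using the two formulas \eqref{eq:product-norm} and \eqref{eq:calderon-norm}, which express the product norm and the Calder\'on norm purely in terms of majorization of absolute values, and to reduce everything to a change of exponents.

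\textbf{Proof of (i).} I would start from \eqref{eq:calderon-norm}, which says $\|x\|_{E_1^{1-\theta}E_2^\theta}=\inf\{\|x_1\|_{E_1}^{1-\theta}\|x_2\|_{E_2}^\theta : |x|=|x_1|^{1-\theta}|x_2|^\theta\}$. The substitution $u_1=|x_1|^{1-\theta}$, $u_2=|x_2|^\theta$ turns this into $|x|=u_1u_2$ with $u_1\in E_1^{(1/(1-\theta))}$ and $u_2\in E_2^{(1/\theta)}$, since $u_1\in E_1^{(1/(1-\theta))}$ exactly means $|u_1|^{1/(1-\theta)}=|x_1|\in E_1$, and then $\|u_1\|_{E_1^{(1/(1-\theta))}}=\||u_1|^{1/(1-\theta)}\|_{E_1}^{1-\theta}=\|x_1\|_{E_1}^{1-\theta}$, and symmetrically for $u_2$. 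Thus the infimum over Calder\'on representations equals $\inf\{\|u_1\|_{E_1^{(1/(1-\theta))}}\|u_2\|_{E_2^{(1/\theta)}}:|x|=u_1u_2\}$, which by \eqref{eq:product-norm} is precisely $\|x\|_{E_1^{(1/(1-\theta))}\odot E_2^{(1/\theta)}}$. One should note that \eqref{eq:calderon-norm} uses the equality $|x|=|x_1|^{1-\theta}|x_2|^\theta$ while \eqref{eq:product-norm} uses the inequality $|x|\le x_1x_2$; the passage between ``$=$'' and ``$\le$'' is routine (absorb slack into one factor, using that $E_i$ is an ideal space), and this minor bookkeeping is the only thing to be careful about. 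The membership $x\in E_1^{1-\theta}E_2^\theta \iff x\in E_1^{(1/(1-\theta))}\odot E_2^{(1/\theta)}$ follows from the same substitution at the level of sets.

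\textbf{Proof of (ii).} The identity $(E_1\odot E_2)^{(p)}=E_1^{(p)}\odot E_2^{(p)}$ is a direct consequence of the definitions together with \eqref{eq:pawerp-q}. On the level of sets: $x\in (E_1\odot E_2)^{(p)}$ iff $|x|^p\in E_1\odot E_2$, i.e. $|x|^p=v_1v_2$ with $v_i\in E_i^+$; writing $v_i=|x_i|^p$ gives $|x|=|x_1x_2|$ with $|x_i|^p\in E_i$, i.e. $x_i\in E_i^{(p)}$, so $x\in E_1^{(p)}\odot E_2^{(p)}$, and the argument reverses. For the norm: $\|x\|_{(E_1\odot E_2)^{(p)}}=\||x|^p\|_{E_1\odot E_2}^{1/p}=\big(\inf\|v_1\|_{E_1}\|v_2\|_{E_2}\big)^{1/p}$ over $|x|^p\le v_1v_2$; substituting $v_i=x_i^p$ ($x_i\in E_i^{(p)+}$) this equals $\inf\big(\|x_1^p\|_{E_1}\|x_2^p\|_{E_2}\big)^{1/p}=\inf\|x_1\|_{E_i^{(p)}}\|x_2\|_{E_2^{(p)}}$ over $|x|\le x_1x_2$, which is $\|x\|_{E_1^{(p)}\odot E_2^{(p)}}$ by \eqref{eq:product-norm}. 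Again the only subtlety is the $\le$ versus $=$ in the representation, handled as in (i).

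\textbf{Proof of (iii).} This is the special case $\theta=1/2$ of (i) followed by taking the $(1/2)$-concavification; alternatively one combines (i) and (ii). From (i) with $\theta=1/2$, $E_1^{1/2}E_2^{1/2}=E_1^{(2)}\odot E_2^{(2)}$. Applying $(\cdot)^{(1/2)}$ to both sides and using (ii), $\big(E_1^{1/2}E_2^{1/2}\big)^{(1/2)}=\big(E_1^{(2)}\odot E_2^{(2)}\big)^{(1/2)}=\big(E_1^{(2)}\big)^{(1/2)}\odot\big(E_2^{(2)}\big)^{(1/2)}=E_1^{(1)}\odot E_2^{(1)}=E_1\odot E_2$, where the third equality uses \eqref{eq:pawerp-q}. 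I do not expect any serious obstacle here; the whole proposition is essentially a dictionary between pointwise factorizations and exponent arithmetic, and the quasi-Banach setting changes nothing since \eqref{eq:product-norm} and \eqref{eq:calderon-norm} already hold in that generality. If one wants to be fully rigorous about equality of the quasi-norms (not just the underlying sets), the main—but still routine—point is to check that the infima in \eqref{eq:product-norm} and \eqref{eq:calderon-norm} are genuinely attained up to arbitrary $\varepsilon$ after the exponent substitution, which is immediate because the substitutions $v\mapsto v^p$ are bijections on the positive cones preserving the ideal structure.
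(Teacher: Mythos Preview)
Your proposal is correct and follows exactly the approach the paper indicates: the paper does not give a detailed proof but simply states that the result follows from \eqref{eq:product-norm} and \eqref{eq:calderon-norm} (citing Theorem~1 of \cite{KLM} for the Banach case), and your exponent-substitution argument is precisely the way one unpacks those two formulas to obtain (i)--(iii). You have in fact written out the details the paper leaves implicit, including the routine passage between ``$=$'' and ``$\le$'' in the representations.
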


\begin{proposition}\label{pro:product}
Let  $E_{j}$ be a symmetric quasi Banach function space on
$(0,\alpha)$, $j=1,\;2$. Then the Calder$\rm\acute{o}$n space $(E_1^{1-\theta} E_2^{\theta},\|\cdot\|_{E_1^{1-\theta} E_2^{\theta}})$ can be equipped with
an equivalent  quasi norm $\|\cdot\|$ so that $(E_1^{1-\theta} E_2^{\theta},\|\cdot\|)$ is a symmetric quasi Banach function space  on
$(0,\alpha)$.
\end{proposition}
\begin{proof} Let $F=(E_1^{1-\theta} E_2^{\theta},\;\|\cdot\|_{E_1^{1-\theta} E_2^{\theta}})$. We denote by $(F^{(*)},\|\cdot\|_{F^{(*)}})$ the symmetrization of $F$, where $\|f\|_{F^{(*)}}=\|f^*\|_{F}$ and  $f^*$ is the nonincreasing  rearrangement of $f$. It is enough to prove that  $(F^{(*)},\|\cdot\|_{F^{(*)}})$ is a quasi Banach function space and $F=F^{(*)}$ with equivalent quasi norms.

(i) If $f\in F$, then $|f|\le\lambda|f_1|^{1-\theta}|f_2|^{\theta}$ for some $\lambda>0$ and $f_i\in B(E_i),\; i=1,\;2$. Applying the property $10^\circ$ from \cite[p.67]{KPS}
and the fact that $(|g|^p)^*=(g^*)^p\;(g \in L_0 (\Omega))$, we get
 $$\begin{array}{rl}
    f^*(t)& \le \big(\lambda|f_1|^{1-\theta}|f_2|^{\theta}\big)^{*}(t) \le\lambda\big(|f_1|^{1-\theta}\big)^{*}(\frac{t}{2})\big(|f_2|^{\theta}\big)^{*}(\frac{t}{2}) \\
      & =\lambda\big[D_{2}f_1^{*}\big]^{1-\theta}(t)\big[D_{2}f_2^{*}\big]^{\theta}(t).
        \end{array}
 $$
 Since $D_{2}$ is bounded on each $E_i$ (cf. \cite[Lemma 2.2]{D}), we conclude $f^*\in F$.
 It follows that $F\subset F^{(*)}$ and there is a constant $C>0$ such that
$$
 \|f\|_{F^{(*)}}\le C\|f\|_F,\qquad \forall f\in F.
$$

 (ii) Let  $f^*\in F$. Then $f^*\le\lambda|f_1|^{1-\theta}|f_2|^{\theta}$ for some $\lambda>0$ and $f_i\in B(E_i),\; i=1,\;2$.

 First suppose that $f^*(\8)=0$. Then there is a measure preserving transformation $\sigma$ such that $f^*\circ\sigma=|f|$ (see \cite{BS} for example). Thus
 $$
|f|=f^*\circ\sigma\le\lambda|f_1\circ\sigma|^{1-\theta}|f_2\circ\sigma|^{\theta}
 $$
 and $f_i\circ\sigma\in B(E_i),\; i=1,2$, because $E_i$ are symmetric. Therefore $|f|\in F$. Since $F$ has the ideal property (see \cite[Proposition 1]{KLM}), so $f\in F$.

 Now suppose that $f^*(\8)>0$.  Then there is a measure preserving transformation $\sigma:\mathcal{C}\rightarrow [0,\8)$ such that $f^*\circ\sigma=|f|$ a.e. on $\mathcal{C}$, where $\mathcal{C}=\{t:|f(t)|>f^*(\8)\}$ ( see \cite[Lemma 2.2]{CKP}  and \cite{Ka}). Similarly as above we get $|f|\chi_\mathcal{C}\in F$. On the other hand, since $f^*(\8)>0$ and $f^*(\8)\chi_{(0,\8)}\le f^*$, so $\chi_{(0,\8)}\in F$ (because $F$ has the ideal property) and consequently $|f|\chi_{(0,\8)\backslash \mathcal{C}} \le f^*(\8)\chi_{(0,\8)}\in F$. Hence, $|f|\in F$, and so $f\in F$. From this we obtain that $ F^{(*)}\subset F$ and there exists  a constant $C'>0$ such that
$$
\|f\|_F\le C'\|f\|_{F^{(*)}},\qquad \forall f\in F^{(*)}.
$$

By (i) and (ii), it follows immediately that $F^{(*)}$ is a linear space. Using Corollary 1 in \cite{KLM1}, we obtain that  $(F^{(*)},\|\cdot\|_{F^{(*)}})$ is a quasi-normed space. Hence, from (i) and (ii), we get the desired result.
 This finishes proof.
\end{proof}

By Proposition \ref{thm:product-calderon} and \ref{pro:product}, we get the following result.

\begin{corollary}\label{cor:product}
Let  $E_{i}$ be a
symmetric quasi Banach  function space on
$(0,\alpha)$, $j=1, 2$. Then there is an equivalent quasi norm $\|\cdot \|$ so that $(E_{1}\odot E_{2},\;\|\cdot \|)$ is  a symmetric  quasi Banach function space on
$(0,\alpha)$.
\end{corollary}

Let $E_1$ and $E_2$ be  symmetric Banach function spaces on $(0,\alpha)$  and $0<\theta<1$. From \cite[Theorem IV.1.14]{KPS}
(see also \cite{Ca} ) it follows that if $E_1^{1-\theta}E_2^\theta$ has order continuous norm,  then
$(E_1,E_2)_\theta=E_1^{1-\theta}E_2^\theta$ holds with equality of norms.  For symmetric quasi Banach function spaces, we have the following result.

\begin{theorem}\label{thm:interpolation-colderon}
Let $E_j$ be a symmetric quasi Banach function space on $(0,\alpha)$ which is
$s_j$-convex for some $0 < s_j < \8$, $j=1,\; 2$. If  $E_j$ has order continuous norm, $j=1,\; 2$, then
\be
(E_1,E_2)_\theta=E_1^{1-\theta}E_2^\theta,\quad \forall \theta\in(0,1).
\ee
and
$$
A\|x\|_{ E_1^{1-\theta}E_2^\theta}\le \|x\|_{(E_1,E_2)_\theta}\le B\|x\|_{ E_1^{1-\theta}E_2^\theta},\quad \forall x\in  E_1^{1-\theta}E_2^\theta,
$$
where $A$ and $B$  are positive constants which does not depend  on $\alpha$.
\end{theorem}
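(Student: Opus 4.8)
The plan is to reduce the statement to the already known Banach‑space identity \eqref{interpolation-product} by passing to convexifications, reusing the computation carried out in the proof of Proposition~\ref{pro:product}. Fix an \emph{odd} $n\in\mathbb{N}$ with $ns_1\ge1$ and $ns_2\ge1$ (any sufficiently large odd $n$ works) and put $F_j:=E_j^{(n)}$, $j=1,2$. Since $E_j$ is $s_j$-convex, $E_j^{(1/s_j)}$ is $1$-convex, hence renormable as a Banach lattice, so by Lemma~\ref{lem:equivalent symmetric norm} it is renormable as a symmetric Banach space; because $F_j=(E_j^{(1/s_j)})^{(ns_j)}$ with $ns_j\ge1$, the space $F_j$ is itself renormable as a symmetric Banach space on $(0,\alpha)$, with renorming constants depending only on $n$, $s_j$ and the $s_j$-convexity constants of $E_j$ --- never on $\alpha$. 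I would fix such equivalent Banach norms once and carry the constants along.

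Two facts make the reduction go. First, by the chain of identities in the proof of Proposition~\ref{pro:product} (which uses only Proposition~\ref{thm:product-calderon} and \eqref{eq:pawerp-q}),
\[
\big(E_1^{1-\theta}E_2^\theta\big)^{(n)}=F_1^{1-\theta}F_2^{\theta},
\]
with equality of quasi-norms. Second, order continuity of the norm passes from a quasi-Banach lattice $Y$ to $Y^{(n)}$: if $x_i\downarrow0$ in $Y^{(n)}$ then $x_i^{n}\downarrow0$ in $Y$ and $\|x_i\|_{Y^{(n)}}=\|x_i^{n}\|_Y^{1/n}$. Hence $F_1^{1-\theta}F_2^\theta=(E_1^{1-\theta}E_2^\theta)^{(n)}$ has order continuous norm, so \eqref{interpolation-product} applies with $F_1,F_2$ in place of $E_1,E_2$ and gives $(F_1,F_2)_\theta=F_1^{1-\theta}F_2^\theta$ --- equality of norms for the chosen Banach norms, hence an equivalence with $\alpha$-independent constants for the original quasi-norms.

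The one remaining link --- and the step I expect to be the main obstacle --- is the commutation of complex interpolation with $n$-convexification,
\[
\big((E_1,E_2)_\theta\big)^{(n)}=(E_1^{(n)},E_2^{(n)})_\theta=(F_1,F_2)_\theta .
\]
Here oddness of $n$ is used: if $g\in\F_0(E_1^{(n)},E_2^{(n)})$ represents $w$, its honest $n$-th power $g^n$ --- which, $n$ being odd, expands as a finite $A(S)$-combination of products $y_{k_1}\cdots y_{k_n}$, each lying in $E_1\cap E_2$ by the ideal property and an arithmetic–geometric mean estimate --- belongs to $\F_0(E_1,E_2)$, represents $w^n$, and has $\F_0(E_1,E_2)$-norm at most $\|g\|_{\F_0(E_1^{(n)},E_2^{(n)})}^{\,n}$; whence $(E_1^{(n)},E_2^{(n)})_\theta\hookrightarrow\big((E_1,E_2)_\theta\big)^{(n)}$ contractively. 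For the reverse inclusion I would combine the ``easy half'' of Calder\'on's estimate, namely $(E_1,E_2)_\theta\hookrightarrow E_1^{1-\theta}E_2^\theta$ --- proved directly by the subharmonicity/three-lines argument, where $s_j$-convexity plays the role of the triangle inequality (an integral Minkowski inequality) and order continuity is not needed --- with the Banach identity \eqref{interpolation-product} for $F_1,F_2$ just obtained: if $w^n\in(E_1,E_2)_\theta$ then $w^n\in E_1^{1-\theta}E_2^\theta$, hence $w\in (E_1^{(n)})^{1-\theta}(E_2^{(n)})^\theta=F_1^{1-\theta}F_2^\theta=(F_1,F_2)_\theta$, with all norms controlled. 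None of these steps introduces an $\alpha$-dependent constant; the only point requiring care is that the identifications respect the completions used to define $(\cdot,\cdot)_\theta$.

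Granting the commutation, the proof finishes by chaining
\[
\big((E_1,E_2)_\theta\big)^{(n)}=(F_1,F_2)_\theta=F_1^{1-\theta}F_2^\theta=(E_1^{1-\theta}E_2^\theta)^{(n)}
\]
and applying $(1/n)$-concavification, using $(Z^{(n)})^{(1/n)}=Z$ from \eqref{eq:pawerp-q}: this yields $(E_1,E_2)_\theta=E_1^{1-\theta}E_2^\theta$, and propagating the accumulated ($\alpha$-independent) constants through the concavification gives
\[
A\|x\|_{E_1^{1-\theta}E_2^\theta}\le\|x\|_{(E_1,E_2)_\theta}\le B\|x\|_{E_1^{1-\theta}E_2^\theta},\qquad\forall x\in E_1^{1-\theta}E_2^\theta,
\]
with $A,B>0$ depending only on $\theta$ and on the convexity constants of $E_1$ and $E_2$, in particular not on $\alpha$.
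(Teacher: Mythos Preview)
For the inclusion $E_1^{1-\theta}E_2^\theta\subset(E_1,E_2)_\theta$ your plan and the paper's coincide: both factor $x$ into $n$ pieces lying in $F_1^{1-\theta}F_2^\theta=(F_1,F_2)_\theta$, lift each piece via~\eqref{interpolation-product} to a function in $\F_0(F_1,F_2)$, and multiply to obtain an element of $\F_0(E_1,E_2)$ representing $x$. (The oddness of $n$ is unnecessary; the paper handles the phase by writing $x=u|x|^{1/n}\cdot|x|^{1/n}\cdots|x|^{1/n}$, which also resolves the issue that your map $g\mapsto g^n$ hits $w^n$ rather than an arbitrary element.) The genuine difference is in the reverse inclusion $(E_1,E_2)_\theta\subset E_1^{1-\theta}E_2^\theta$. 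The paper proves it by \emph{factorizing the analytic function}: given $g\in\F_0(E_1,E_2)$ it invokes the method of Theorem~4.3 in~\cite{X} (together with the Riemann mapping theorem) to write $g=\prod_{k=1}^n g_k$ with each $g_k$ bounded in $\F_0(E_1^{(n)},E_2^{(n)})$, applies~\eqref{interpolation-product} to each $g_k(\theta)$, and multiplies. Your route instead runs Calder\'on's pointwise three-lines estimate directly, replacing the integral Minkowski inequality by an $s_j$-convexity argument. This is more elementary in that it avoids the nontrivial analytic factorization from~\cite{X}; the price is that you must justify the integral Minkowski step carefully---pass to the Banach renorming of $E_j^{(1/s_j)}$ and dominate the pointwise Poisson integral there---since the naive Bochner estimate can fail in a bare quasi-Banach lattice. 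One notational caution: writing $((E_1,E_2)_\theta)^{(n)}$ presupposes that $(E_1,E_2)_\theta$ is already an ideal space, which is part of what is to be proved; it is cleaner to phrase both inclusions directly as norm inequalities on $E_1\cap E_2$ and then pass to the completion, as the paper does.
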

From  \cite[Proposition 7.8]{KMM}, we know that  quasi Banach lattice $X$  is analytically convex if and only if
 there exists $r > 0$ such that $X$ is $r$-convex. Hence, Theorem \ref{thm:interpolation-colderon} is a special case of  Theorem 3.4 from \cite{KM}.

\begin{remark}
Similar to Remark of  Theorem 4.1.14 of \cite{KPS}, we have that if at least one of $E_1$ and $E_2$  has order continuous norm, then $E_1^{1-\theta}E_2^\theta$ has order continuous norm.
\end{remark}

\begin{corollary}\label{cor:interpolation-n}
Let $E_j$ be a symmetric quasi Banach  function space on $(0,\alpha)$ which is
$s_j$-convex for some $0 < s_j < \8$, $j=1,\; 2$. Suppose $E=E_1^{1-\theta} E_2^{\theta}\;(0<\theta<1)$. If $E_j$ has order continuous norm, $j=1,\; 2$, then $E^{(n)}=(E_1^{(n)},E_2^{(n)})_\theta$ for any  $n\in\mathbb{N}$.
\end{corollary}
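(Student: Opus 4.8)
The plan is to deduce this directly from Theorem \ref{thm:interpolation-colderon} together with the algebraic identities for powers of pointwise products and Calder\'on spaces collected in Proposition \ref{thm:product-calderon}. The key observation is that the operation $F \mapsto F^{(n)}$ interacts well with both the Calder\'on construction and the convexification scaling, so that applying the theorem once at level $n$ is the same as applying it at level $1$ and then convexifying.

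First I would record that $E_j^{(n)}$ is again a symmetric quasi-Banach space which is $s_j n$-convex (hence $s_j n$-convex for some positive exponent), so Theorem \ref{thm:interpolation-colderon} is applicable to the couple $(E_1^{(n)}, E_2^{(n)})$ once we know the relevant Calder\'on space has order continuous norm. Next, using \eqref{eq:pawerp-q} and parts (i)--(ii) of Proposition \ref{thm:product-calderon} exactly as in the display inside the proof of Proposition \ref{pro:product}, one gets the identity
$$
\big(E_1^{1-\theta}E_2^\theta\big)^{(n)} = (E_1^{(n)})^{1-\theta}(E_2^{(n)})^\theta .
$$
Combining this with the hypothesis $E = (E_1,E_2)_\theta = E_1^{1-\theta}E_2^\theta$ (Theorem \ref{thm:interpolation-colderon}), which also carries the order continuity of the norm of $E$, we obtain that $(E_1^{(n)})^{1-\theta}(E_2^{(n)})^\theta = E^{(n)}$. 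It remains to check that $E^{(n)}$ has order continuous norm: this follows because a net $x_i \downarrow 0$ in $E^{(n)}$ corresponds to $x_i^n \downarrow 0$ (more precisely $|x_i|^n \downarrow 0$) in $E$, and $\|x_i\|_{E^{(n)}} = \|\,|x_i|^n\|_E^{1/n} \downarrow 0$ by order continuity of the norm of $E$. Hence Theorem \ref{thm:interpolation-colderon}, applied to the $s_jn$-convex spaces $E_j^{(n)}$, yields
$$
(E_1^{(n)},E_2^{(n)})_\theta = (E_1^{(n)})^{1-\theta}(E_2^{(n)})^\theta = E^{(n)},
$$
which is the assertion.

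The only genuinely non-routine point is verifying that the Calder\'on space at level $n$ has order continuous norm, i.e. transferring order continuity from $E$ to $E^{(n)}$; everything else is bookkeeping with \eqref{eq:pawerp-q} and Proposition \ref{thm:product-calderon}. I expect that transfer to be straightforward via the correspondence $x \leftrightarrow |x|^n$ between order-bounded decreasing nets in $E^{(n)}$ and in $E$, together with continuity of $t \mapsto t^{1/n}$ at $0$, but it should be stated explicitly since the definition of order continuity is phrased in terms of nets rather than sequences.
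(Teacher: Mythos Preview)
Your proposal is correct and follows exactly the route the paper intends: the corollary is stated in the paper without an explicit proof, as an immediate consequence of Theorem \ref{thm:interpolation-colderon} together with the identity $\big(E_1^{1-\theta}E_2^\theta\big)^{(n)} = (E_1^{(n)})^{1-\theta}(E_2^{(n)})^\theta$ derived in the proof of Proposition \ref{pro:product}. Your added verification that order continuity of the norm passes from $E$ to $E^{(n)}$ via the correspondence $x\leftrightarrow |x|^n$ is the one detail the paper leaves to the reader, and your argument for it is fine.
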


\begin{proposition}\label{pro:noncommutative-calderon}
Let $E_{1},\;E_{2}$ be  symmetric quasi Banach function spaces on $[0,1]$. Suppose $0<\theta<1$ and $E=E_1^{1-\theta}E_2^\theta$. Define
$$
E_1(\M)^{1-\theta}E_2(\M)^\theta=\{x:\;|x|\le\lambda|x_{1}|^{1-\theta}|x_{2}|^{\theta},\;x_{i}\in B(E_{i}(\M)),\;i=1,\;2\},
$$
$$
\|x\|_{E_1(\M)^{1-\theta}E_2(\M)^\theta}=\inf\{\lambda:
\;|x|\le\lambda|x_{1}|^{1-\theta}|x_{2}|^{\theta},\;x_{i}\in B(E_i(\M)),\;i=1,\;2\},
$$
then $E(\M)=E_1(\M)^{1-\theta}E_2(\M)^\theta$ with equivalent norms.
\end{proposition}

\begin{proof} Let $x\in E_1(\M)^{1-\theta}E_2(\M)^\theta$. Then for any $\varepsilon>0$, there are $\lambda>0$ and $x_{i}\in B(E_{i}(\M))$ ($i=1,\;2$) such that
$|x|\le\lambda|x_{1}|^{1-\theta}|x_{2}|^{\theta}$ and $\lambda<\|x\|_{E_1(\M)^{1-\theta}E_2(\M)^\theta}+\varepsilon$. Since
$$
\mu_t(x)\le \lambda\mu_{\frac{t}{2}}^{1-\theta}(x_1)\mu_{\frac{t}{2}}^{\theta}(x_2)=\lambda[D_2\mu(x_1)]^{1-\theta}(t)[D_2\mu(x_2)]^{\theta}(t)
$$
 and $\|D_2\mu(x_i)\|_{E_i}\le C$, $i=1,\;2$, where $C$ is a positive constant (see \cite[Lemma 2.2]{D}).  It follows that  $x\in E(\M)$ and $\|x\|_E\le C\|x\|_{E_1(\M)^{1-\theta}E_2(\M)^\theta}$.

To prove the converse let $x\in E(\M)$ and set $\N$ be a commutative von Neumann
subalgebra  containing the spectral
projections of $|x|$. We may also choose $\N$ so that the
restriction of $\tau$ to it is still semifinite.  $\N$ is
identifiable with $L_\8(\Omega, \mu)$ for some measure space $(\Omega,
\mu)$, where $\mu$ is the measure induced by $\tau$.  Hence, $|x|\in
 E(\mathcal{N})=E(\Omega,\mu)$. It follows that for every $\varepsilon>0$, there are
 $x_1\in B(E_1(\Omega,\mu))^+=B(E_1(\N))^+$ and $x_2\in B(E_2(\Omega,\mu))^+=B(E_2(\N))^+$ such that $|x|\le \lambda x_1^{1-\theta}x_2^{\theta}$ and
 $\|x\|_{E}+\varepsilon>\lambda$. Therefore, $\|x\|_E\ge\|x\|_{E_1(\M)^{1-\theta}E_2(\M)^\theta}$.  This completes the proof.
\end{proof}

Similar to Proposition \ref{pro:noncommutative-calderon}, we have the following (see \cite[Theorem 3 and 4]{S1} and \cite[Theorem 2.5] {Be}) :
\begin{proposition}\label{pro:noncommutative-ponitwise product}
 Let $E_{1},\;E_{2}$ be
 symmetric quasi Banach function spaces on $(0,\alpha)$ and  $E=E_1\odot E_2$. Set
 $$
E_1(\M)\odot E_2(\M)=\{x:\;x=x_{1}x_{2},\;x_{i}\in E_{i}(\M),\;i=1,\;2\},
$$
$$
\|x\|_{E_1(\M)\odot E_2(\M)}=\inf\{\|x_{1}\|_{E_1}\|x_{2}\|_{E_2}:
\;x=x_{1}x_{2},\;x_{i}\in E_i(\M),\;i=1,\;2\}.
$$
Then $E(\M)=E_1(\M)\odot E_2(\M)$ with equivalent norms.
\end{proposition}

We recall interpolation of noncommutative symmetric spaces. Let $E_1,\;E_2$ be fully symmetric Banach function spaces on $(0,\alpha)$  and $0<\theta<1$. If $E$ is complex interpolation  of $E_1$ and $E_2$, i.e. $E=(E_1,E_2)_\theta$. Then
\begin{equation}\label{eq:interpolation-product}
E(\M)=(E_1(\M),E_2(\M))_\theta.
\end{equation}
For more details on interpolation of noncommutative symmetric spaces we refer to
\cite{DDP2}.

\begin{lemma}\label{lem:analytic convex} Let $E$ be a symmetric quasi Banach function space on $(0,\alpha)$ equipped with  Fatou
quasi norm. If $E$ is  analytically convex, then $E(\M)$ can be equipped with
an equivalent  quasi norm $\|\cdot\|$ so that $\|\cdot\|$    is
plurisubharmonic.  Consequently, $E(\M)$ is  analytically convex.
\end{lemma}
\begin{proof} If $(E,\;\|\cdot\|_E)$ is analytically convex, then there exists $r > 0$ such that $E$ is $r$-convex (see \cite[Proposition 7.8]{KMM}).  $E$  can be given an equivalent quasi norm $\|\cdot\|'_E$ so that $(E,\;\|\cdot\|'_E)$ is a symmetric quasi Banach function space whose
 $r$-convexity constant is equal to 1 (see \cite[Corollary 3.5]{DDS}). By Lemma 4.2 from \cite{X}, $(E(\M),\;\|\cdot\|'_E)$  is
plurisubharmonic. Hence $E(\M)$ is analytically convex (see \cite{K}). This completes the proof.
\end{proof}

For $x\in\M$, we denote by $r(x)$ the right support of $x$, i.e., $r(x)$ is the smallest projection $e$ in  $\M$ such that $xe=x$. Set
$F(\M)=\{x:\; x\in\M,\; \tau(r(x))<\8\}$. Let $A(S)$ be the space of complex valued functions, analytic in $S$ and continuous and bounded
in  $\overline{S}$.
Denote $AF(\M)$ the family of functions of the form $f(z)=\sum_{k=1}^n f_k(z)x_k$ with $f_k$ in $A(S)$ and $x_k$ in $F(\M)$.

Let $E_j$ be a separable symmetric quasi Banach function space on $(0,\alpha)$ such that
$s_j$-convex for some $0 < s_j < \8$, $j=1,\; 2$.  It is clear that $E_1(\M)+E_2(\M)=(E_1+E_2)(\M)$ in the sense of equivalence of quasi norms (see Theorem \ref{K-functional for noncommutative}).  Since  $E_1+E_2$ is  separable and  $s$-convex ($s=\min\{s_1,s_2\}$), from the proof of Lemma \ref{lem:analytic convex}, we obtain that there is an equivalent quasi norm $\|\cdot \|$ on $E_1(\M)+E_2(\M)$ which is plurisubharmonic. On the other hand, since $E_1,\;E_2$ and $ E_1+E_2$ are  separable and $s$-convex, by \cite[Lemma 4.1]{X}, $F(\M)$ is dense in $E_1(\M),\;E_2(\M)$ and $E_1(\M)+E_2(\M)$, respectively. Using the same method as  in the proof of \cite[Theorem 4.4]{X}, we  deduce  that  $AF(\M)$  dense in $\F(E_{1}(\M),E_{2}(\M))$. Hence, we use \eqref{eq:interpolation-product} and Corollary \ref{cor:interpolation-n}  to obtain the following result:

\begin{theorem}\label{thm:interpolation} Let $E_j$ be a symmetric quasi Banach function space on $(0,\alpha)$ which is
$s_j$-convex for some $0 < s_j < \8$, $j=1,\; 2$. Suppose  $E_j$ has order continuous norm, $j=1,\; 2$ and $0<\theta<1$. If $E=E_1^{1-\theta}E_2^\theta$, then
\be
E(\M)=(E_1(\M),E_2(\M))_\theta
\ee
and
$$
A\|x\|_{ E}\le \|x\|_{(E_1(\M),E_2(\M))_\theta}\le B\|x\|_{E},\quad \forall x\in  E(\M),
$$
where $A$ and $B$  are positive constants which  does not depend  on $\alpha$.
\end{theorem}
\begin{proof}
 Chose $n\in\mathbb{N}$ such that $ns_j\ge1\;(j=1,\;2)$. Then   $E_j^{(n)}$ can be renormed as a symmetric Banach function space ($j=1,\;2$).  In the following, we consider  $E_j^{(n)}$ with this new symmetric norm ($j=1,\;2$).

  Let $x\in E(M)$ and  $x=u|x|$ be the  polar decomposition of $x$. Set $x_1=u|x|^\frac{1}{n}$ and $x_k=|x|^\frac{1}{n}$ for $2\le k\le n$. Then
 $x=x_1x_2\cdots x_n$ and $x_k\in E^{(n)}(\M)=(E_1^{(n)})^{1-\theta}(E_2^{(n)})^\theta(\M)\;(1\le k\le n)$. By \eqref{eq:interpolation-product},
it follows that for any $\varepsilon>0$, there exists $f_k\in\mathcal{F}(E_{1}^{(n)}(\M),E_{2}^{(n)}(\M))$ such that $f_k(\theta)=x_k$ and
$$\begin{array}{rl}
\big\|f_k\big\|_{\F(E_{1}^{(n)},E_{1}^{(n)})}&=\max\{\sup_{t\in\mathbb{R}}\|f_k(it)\|_{E_1^{(n)}},\;\sup_{t\in\mathbb{R}}\|f_k(1+it)\|_{E_2^{(n)}}\}\\
&<\|x_k\|_{E^{(n)}}+\varepsilon.
\end{array}
$$
Set $f=\prod_{k=1}^nf_k$. Then  $f\in\F(E_{1}(\M),E_{2}(\M))$ and $f(\theta)=x$. By Proposition \ref{pro:noncommutative-ponitwise product},
$$\begin{array}{rl}
    \|x\|_{(E_1(\M),E_2(\M))_\theta}&\le\big\|f\big\|_{\F(E_{1}(\M),E_{2}(\M))}=\max\{\sup_{t\in\mathbb{R}}\|f(it)\|_{E_1},\;\sup_{t\in\mathbb{R}}\|f(1+it)\|_{E_2}\}\\
    &=\max\{\sup_{t\in\mathbb{R}}\|\prod_{k=1}^nf_k(it)\|_{E_1},\;\sup_{t\in\mathbb{R}}\|\prod_{k=1}^nf_k(1+it)\|_{E_2}\}\\
    &\le B\max\{\sup_{t\in\mathbb{R}}\prod_{k=1}^n\|f_k(it)\|_{E_1^{(n)}},\;\sup_{t\in\mathbb{R}}\prod_{k=1}^n\|f_k(1+it)\|_{E_2^{(n)}}\}\\
    &\le B\max\{\prod_{k=1}^n\sup_{t\in\mathbb{R}}\|f_k(it)\|_{E_1^{(n)}},\;\prod_{k=1}^n\sup_{t\in\mathbb{R}}\|f_k(1+it)\|_{E_2^{(n)}}\}\\
    &\le B\prod_{k=1}^n\max\{\sup_{t\in\mathbb{R}}\|f_k(it)\|_{E_1^{(n)}},\;\sup_{t\in\mathbb{R}}\|f_k(1+it)\|_{E_2^{(n)}}\}\\
    &<B\prod_{k=1}^n(\|x_k\|_{E^{(n)}}+\varepsilon).
  \end{array}
$$
Letting $\varepsilon\rightarrow0$, we get
$$
\|x\|_{(E_1(\M),E_2(\M))_\theta}\le B\prod_{k=1}^n\|x_k\|_{E^{(n)}}=B\|x\|_{E}.
$$

Conversely, it is enough to prove that if $x\in F(\M)$ and $ f\in AF(\M)$ with $f(\theta)=x$, then
\beq\label{eq:product-inter2}
\|x\|_{E}\le C\max\big\{
 \sup_{t\in\real}\big\|f(it)\big\|_{E_1}\,,\;
 \sup_{t\in\real}\big\|f(1+it)\big\|_{E_2}\big\}.
\eeq
Using reversed H$\rm\ddot{o}$lder's inequality (see \cite[lemma 2.2]{Xu1}), we obtain that
for any $\varepsilon>0$, there exist $f_1,\;f_2,\cdots,\;f_n\in AF(\M)$ such that $f=\prod_{k=1}^nf_k$ and for all $z\in\partial S,\;t>0$,
$$
\mu_t(f_k(z))<\mu_t(f(z))^\frac{1}{n}+\varepsilon,\quad k=1,2,\cdots,n.
$$
Hence,
$$
\begin{array}{l}
\max\big\{\sup_{t\in\real}\big\|f_k(it)\big\|_{E_1^{(n)}}\,,\; \sup_{t\in\real}\big\|f_k(1+it)\big\|_{E_2^{(n)}}\big\}\\
\qquad\le\max\big\{\sup_{t\in\real}\big\|\mu_t(f(it))^\frac{1}{n}\big\|_{E_1^{(n)}}\,,\; \sup_{t\in\real}\big\|\mu_t(f(1+it))^\frac{1}{n}\big\|_{E_2^{(n)}}\big\}+O(\varepsilon)\\
\qquad=\max\big\{ \sup_{t\in\real}\big\|f(it)\big\|_{E_1}^\frac{1}{n}\,,\; \sup_{t\in\real}\big\|f(1+it)\big\|_{E_2}^\frac{1}{n}\big\}+O(\varepsilon),\quad k=1,2,\cdots,n.
\end{array}
$$
 Set $x_k=f_k(\theta)$. Then $x=\prod_{k=1}^nx_k$. By \eqref{eq:interpolation-product},
$$
x_k\in (E_1^{(n)}(\M),E_2^{(n)}(\M))_\theta=(E_1^{(n)})^{1-\theta}(E_2^{(n)})^\theta(\M)=E^{(n)}(\M)
$$
and
$$
\|x_k\|_{E^{(n)}}\le \max\{\sup_{t\in\mathbb{R}}\|f_k(it)\|_{E_1^{(n)}},\;\sup_{t\in\mathbb{R}}\|f_k(1+it)\|_{E_2^{(n)}}\},\quad 1\le k\le n.
$$
Hence, by Proposition \ref{pro:noncommutative-ponitwise product},
$$\begin{array}{rl}
\|x\|_{E}&\le C\prod_{k=1}^n\|x_k\|_{E^{(n)}}\\
&\le C\prod_{k=1}^n\max\{\sup_{t\in\mathbb{R}}\|f_k(it)\|_{E_1^{(n)}},\;\sup_{t\in\mathbb{R}}\|f_k(1+it)\|_{E_2^{(n)}}\}\\
&\le C\prod_{k=1}^n\max\big\{ \sup_{t\in\real}\big\|f(it)\big\|_{E_1}^\frac{1}{n}\,,\; \sup_{t\in\real}\big\|f(1+it)\big\|_{E_2}^\frac{1}{n}\big\}+O(\varepsilon)\\
&\le C\max\big\{
 \sup_{t\in\real}\big\|f(it)\big\|_{E_1}\,,\;
 \sup_{t\in\real}\big\|f(1+it)\big\|_{E_2}\big\}+O(\varepsilon).
\end{array}
$$
From this follows \eqref{eq:product-inter2}. This finishes the proof.

\end{proof}

 Using Theorem  \ref{thm:interpolation}, Proposition \ref{pro:noncommutative-calderon} and \ref{pro:noncommutative-ponitwise product}, we get the following result.
\begin{corollary}
Let $E_j$ be a symmetric quasi Banach  function space on $(0,\alpha)$ which is
$s_j$-convex for some $0 < s_j < \8$, $j=1,\; 2$. Suppose $E_j$ has order continuous norm $(j=1,\;2)$ and $0<\theta<1$. If $E=E_1^{1-\theta}E_2^\theta$, then
\be
(E_1(\M),E_2(\M))_\theta=E_1^{1-\theta}(\M)E_2^{\theta}(\M)=E_1^{(\frac{1}{1-\theta})}(\M)\odot E_2^{(\frac{1}{\theta})}(\M)
\ee
with equivalent norms.
\end{corollary}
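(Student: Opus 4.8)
The plan is to reduce to a setting where Theorem~\ref{thm:interpolation} applies by passing to convexifications, and then to descend back. Since $E_1$ has order continuous norm, the Remark following Corollary~\ref{cor:interpolation-n} shows that $E=E_1^{1-\theta}E_2^{\theta}$ has order continuous norm, so Theorem~\ref{thm:interpolation-colderon} applies and gives $(E_1,E_2)_\theta=E$ with norms equivalent up to constants independent of $\alpha$. Now fix $n\in\nat$ with $ns_1,ns_2\ge1$. As in the proof of Proposition~\ref{pro:product}, the spaces $E_1^{(n)}$, $E_2^{(n)}$ and $E^{(n)}=(E_1^{(n)})^{1-\theta}(E_2^{(n)})^{\theta}$ can all be renormed as symmetric Banach spaces; moreover each of them inherits order continuity of the norm from $E_1$, $E_2$, $E$ respectively (if $x_i\downarrow0$ then $x_i^{n}\downarrow0$, since the decreasing net $x_i^n$ has infimum $0$), hence each is separable and therefore fully symmetric. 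By \eqref{interpolation-product} applied to the symmetric Banach spaces $E_1^{(n)}$ and $E_2^{(n)}$ --- equivalently, by Corollary~\ref{cor:interpolation-n} --- we get $(E_1^{(n)},E_2^{(n)})_\theta=E^{(n)}$.

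Next I apply Theorem~\ref{thm:interpolation} to the fully symmetric quasi Banach spaces $E_1^{(n)}$ and $E_2^{(n)}$, whose $\theta$-interpolation space $E^{(n)}$ has order continuous norm; this yields
$$
E^{(n)}(\M)=\big(E_1^{(n)}(\M),\,E_2^{(n)}(\M)\big)_\theta
$$
with equivalent norms, the constants independent of $\alpha$. I then use the isometric identity $F^{(n)}(\M)=\big(F(\M)\big)^{(n)}$, valid for every symmetric quasi Banach space $F$ on $(0,\alpha)$: this is immediate from $\mu(|x|^{n})=\mu(x)^{n}$, which gives $\mu(x)\in F^{(n)}\iff|x|^{n}\in F(\M)$ together with $\|x\|_{F^{(n)}(\M)}=\||x|^{n}\|_{F(\M)}^{1/n}=\|x\|_{(F(\M))^{(n)}}$. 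Applying this with $F=E_1,E_2,E$ rewrites the displayed identity as
$$
\big(E(\M)\big)^{(n)}=\big(\,(E_1(\M))^{(n)},\,(E_2(\M))^{(n)}\,\big)_\theta ,
$$
still with constants independent of $\alpha$.

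The last step is to remove the convexification, and this is the only point that is not pure bookkeeping: one needs that complex interpolation commutes with $n$-th convexification for the noncommutative symmetric quasi spaces involved, i.e.
$$
\big(\,(E_1(\M))^{(n)},\,(E_2(\M))^{(n)}\,\big)_\theta=\big(\,(E_1(\M),E_2(\M))_\theta\,\big)^{(n)},
$$
with equivalent norms and constants independent of $\alpha$. This is the exact noncommutative analogue of Corollary~\ref{cor:interpolation-n}, and I would prove it by repeating the factorization argument from the proof of Theorem~\ref{thm:interpolation-colderon}: that argument is already organized around the polar decomposition $x=u|x|$ and the splitting $x=\big(u|x|^{1/n}\big)|x|^{1/n}\cdots|x|^{1/n}$, and around the Riemann mapping theorem together with the $e^{\delta(z^2-\theta^2)}$ device to factor a given $\F_0$-valued analytic function into $n$ analytic factors; the only changes are to work with the noncommutative spaces $E_j^{(n)}(\M)$, $E^{(n)}(\M)$ in place of the function lattices $E_j^{(n)}$, $E^{(n)}$, and to estimate products of $\F_0$-valued functions by the noncommutative H\"{o}lder inequality \eqref{eq:holder} (alternatively, this commutation can be extracted from \cite{X}). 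Granting it, the last two displayed formulas combine to $\big(E(\M)\big)^{(n)}=\big((E_1(\M),E_2(\M))_\theta\big)^{(n)}$ with $\alpha$-free equivalence constants; and since $Z^{(n)}=W^{(n)}$ with equivalent norms forces $Z=W$ with equivalent norms --- every positive $\tau$-measurable operator $z$ has an $n$-th root, $\|z\|_Z=\|z^{1/n}\|_{Z^{(n)}}^{\,n}$, and the norm depends only on $\mu(\cdot)$ --- we conclude $E(\M)=(E_1(\M),E_2(\M))_\theta$ with equivalent norms, the constants being a fixed power of those supplied by Theorem~\ref{thm:interpolation} and hence independent of $\alpha$. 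The main obstacle, as noted, is precisely the commutation of complex interpolation with convexification in the noncommutative setting; everything else reduces to manipulations of convexifications and the identity $F^{(n)}(\M)=(F(\M))^{(n)}$.
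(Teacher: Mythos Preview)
Your argument is essentially correct, but it takes a long detour around what the paper does in one line. The paper's proof is simply: apply Theorem~\ref{thm:interpolation-colderon} and Theorem~\ref{thm:interpolation} in series. Since each $E_j$ has order continuous norm, the Remark gives that $E=E_1^{1-\theta}E_2^{\theta}$ does too, so Theorem~\ref{thm:interpolation-colderon} yields $E=(E_1,E_2)_\theta$; then Theorem~\ref{thm:interpolation} yields $E(\M)=(E_1(\M),E_2(\M))_\theta$ directly. There is no need to convexify, apply the theorem at the Banach level, and then ``descend''.

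The commutation identity you flag as the main obstacle, namely $\big((E_1(\M))^{(n)},(E_2(\M))^{(n)}\big)_\theta=\big((E_1(\M),E_2(\M))_\theta\big)^{(n)}$, is precisely what the proof of Theorem~\ref{thm:interpolation} establishes: that proof, as the paper says, uses \eqref{eq:interpolationL} together with the same factorization method as Theorem~\ref{thm:interpolation-colderon} (polar decomposition $x=u|x|^{1/n}\cdots|x|^{1/n}$ in one direction, analytic factorization via the Riemann mapping device in the other). So by invoking Theorem~\ref{thm:interpolation} at the convexified level and then redoing that same argument to descend, you are effectively proving Theorem~\ref{thm:interpolation} twice instead of once. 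The one place your detour adds value is the hypothesis check: Theorem~\ref{thm:interpolation} asks that $E_j$ be fully symmetric, while the Corollary only assumes order continuous norm. Your observation that order continuity passes to the $n$-convexifications $E_j^{(n)}$ (Banach, hence separable, hence fully symmetric) is exactly what makes \eqref{eq:interpolationL} applicable inside the proof of Theorem~\ref{thm:interpolation}, so the direct route is justified without further work.
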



\section{Complex interpolation of noncommutative symmetric Hardy spaces}

We will assume that $\mathcal{D}$ is a von Neumann
 subalgebra of $\mathcal{M}$ such that the restriction of $\tau$ to $\mathcal{D}$ is still semifinite. Let $\mathcal{E}$ be the (unique) normal faithful
 conditional expectation of $\mathcal{M}$ with respect to $\mathcal{D}$ which
 leaves $\tau$ invariant.

\begin{definition} A w*-closed subalgebra $\A$ of $\M$ is
called a subdiagonal algebra of $\M$ with respect to $\E$(or
$\D$) if \begin{enumerate}[\rm (i)]

\item $\A+ J(A)$ is w*-dense in  $\M$, where $J(A)=\{x^*:\;x\in\A\}$,

\item $\E(xy)=\E(x)\E(y),\; \forall\;x,y\in
\A$,

\item $\A\cap J(A)=\D$.
\end{enumerate}
$\D$ is then called the diagonal of $\A$.
\end{definition}

 Recall that a
subdiagonal algebra $\mathcal{A}$ is automatically  maximal in the
sense that if $\mathcal{B}$ is another subdiagonal algebra with
respect to $\mathcal{E}$ containing $\mathcal{A}$, then
$\mathcal{B}=\mathcal{A}$ (see \cite{E,J}).

\begin{definition}\label{symetrichardysemifinite-definition}
 Let  $E$ be a symmetric (quasi) Banach function space on
$(0,\alpha)$ and $\A$  a semifinite subdiagonal algebra of $\M$. Then
$$
E(\A)=\mbox{closure
 of}\;\mathcal{A}\cap E(\M)\; \mbox{in
 }\; E(\M)
 $$
 is called symmetric (quasi)
Hardy space associated with $\A$.
\end{definition}
If $\M$ is finite ($\tau(1)=\alpha<\8$) and  $E$ is a separable symmetric $s$-convex quasi Banach  space on
$(0,\alpha)$  for some $0 < s < \8$.
By Lemma 4.4 in \cite{Be}, we have that
  \begin{equation}\label{saito}
    E(\A)=H_s(\A)\cap E(\M).
 \end{equation}

\subsection{Finite case}
In this subsection  $\M$ always denotes a finite  von Neumann algebra  with a normal  faithful trace $\tau$ satisfying $\tau(1)=\alpha<\8$. We keep all notations introduced in the last section.

\begin{theorem}\label{interpolationhardy-finite}
Let $E_j$ be a  quasi Banach  function space on $(0,\alpha)$ which is
$s_j$-convex for some $0 < s_j < \8$, $j=1,\;2$ and $0<\theta<1$. Suppose $E_j$ has order continuous norm, $j=1,\;2$, and $E=E_1^{1-\theta}E_2^\theta$. If
 $q_{E_j}<\8$, $j=1,\;2$,
then
$$
(E_1(\A),E_2(\A))_\theta=E(\A)
$$
and
$$
A\|x\|_{ E}\le \|x\|_{(E_1(\A),E_2(\A))_\theta}\le B\|x\|_{E},\quad \forall x\in  E(\A),
$$
where $A$ and $B$ are positive constants and  do not depend  on $\alpha$.
\end{theorem}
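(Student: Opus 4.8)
The plan is to transfer the scalar-level interpolation identity for Hardy spaces to the symmetric quasi-Banach setting by a convexification argument completely parallel to the one used in Theorem~\ref{thm:interpolation-colderon} and Theorem~\ref{thm:interpolation}, combined with the Pisier--Xu description of complex interpolation for noncommutative $H^p$-spaces associated with a finite subdiagonal algebra. First I would fix $n\in\mathbb{N}$ with $ns_j\ge1$ for $j=1,2$, so that $E_j^{(n)}$ is a fully symmetric Banach space (after renorming, via Lemma~\ref{lem:equivalent symmetric norm} and the proof of Proposition~\ref{pro:product}). Since $q_{E_j}<\8$, the Boyd indices of $E_j^{(n)}$ are finite, so $E_j^{(n)}\in Int(L_1,L_p)$ for suitable $p<\8$, which places us in the range where the classical Pisier--Xu theorem (\cite{PX}) applies: for fully symmetric Banach spaces $F_1,F_2$ with the relevant index conditions, $(F_1(\A),F_2(\A))_\theta=F(\A)$ whenever $F=(F_1,F_2)_\theta$. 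Applying this to $F_j=E_j^{(n)}$ gives
\beq\label{eq:hardy-n}
\big(E_1^{(n)}(\A),E_2^{(n)}(\A)\big)_\theta=E^{(n)}(\A),
\eeq
using Corollary~\ref{cor:interpolation-n} to identify $(E_1^{(n)},E_2^{(n)})_\theta$ with $E^{(n)}$ (note $E^{(n)}$ inherits order continuity of the norm from $E$).

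Next I would descend from level $n$ to level $1$. The key mechanism is factorization of analytic functions: given $x\in E_1(\A)\cap E_2(\A)=\A\cap E(\M)$ (dense in $E(\A)$) and an admissible $f\in\F_0(E_1(\A),E_2(\A))$ with $f(\theta)=x$, one writes $f=\prod_{k=1}^n f_k$ with each $f_k$ analytic, bounded on $\overline S$, valued in $E_1^{(n)}(\A)+E_2^{(n)}(\A)$, and with
$$
\prod_{k=1}^n\max\big\{\sup_t\|f_k(it)\|_{E_1^{(n)}},\,\sup_t\|f_k(1+it)\|_{E_2^{(n)}}\big\}<\big\|f\big\|_{\F_0(E_1,E_2)}+\varepsilon,
$$
exactly as in the proof of Theorem~\ref{thm:interpolation-colderon} (the factorization uses $\min\{s_1,s_2\}$-convexity of $E_1\cap E_2$, the Riemann mapping theorem, and the outer-function machinery from \cite{X}; the crucial extra point is that the factors stay in the Hardy space — this is guaranteed because in a finite subdiagonal algebra every element of $H_s(\A)$ admits an outer–inner type factorization, and scalar-valued outer functions preserve membership in $H^p(\A)$, by Saito's theorem and \eqref{saito}). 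Conversely, given such $x_k=f_k(\theta)\in E_j^{(n)}(\A)$ with controlled interpolation norm, the product $f=\prod f_k$ lies in $\F_0(E_1(\A),E_2(\A))$ with $f(\theta)=x$, and submultiplicativity of the quasi-norms under products — together with the Hölder-type inequality \eqref{eq:holder} applied to the Calderón/product description $E=E_1^{1-\theta}E_2^\theta$ pushed to the Hardy space level — yields the reverse estimate. Combining the two directions gives
$$
A\|x\|_E\le\|x\|_{(E_1(\A),E_2(\A))_\theta}\le B\|x\|_E
$$
on the dense subspace, hence on all of $E(\A)$ after completion, with $A,B$ depending only on $\min\{s_1,s_2\}$, $\theta$, $n$ and the convexity constants — in particular independent of $\alpha$.

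The main obstacle is the Hardy-space refinement of the factorization step: in Theorem~\ref{thm:interpolation-colderon} the factors $g_k$ are merely scalar analytic functions multiplying a fixed polar part, but here each factor must itself define an element of the relevant $H^p(\A)$ so that $f_k\in\F_0(E_1(\A),E_2(\A))$ is legitimate, and one must know that the product reconstructs $x$ not just in $E(\M)$ but within $E(\A)$. I would handle this by using \eqref{saito} to reduce to $H_{s}(\A)\cap E^{(n)}(\M)$, invoking Saito's noncommutative outer-function factorization in the finite case to write $x=w\,|x|$-type products whose analytic extensions have all factors in $H_{s/k}(\A)$, and checking that multiplication by a bounded scalar outer function on $\overline S$ keeps us in the Hardy space (this is where finiteness of $\M$ is essential, since it is precisely where the Szegő/Jensen machinery of \cite{PX,A} is available). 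Once the factorization respects the Hardy structure, every remaining inequality is a routine submultiplicativity or Hölder estimate already recorded in Section~3, so no new analytic input beyond \eqref{eq:hardy-n} and the factorization lemma is needed.
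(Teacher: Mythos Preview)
Your overall architecture---convexify to level $n$, invoke the known Banach-space Hardy interpolation $\big(E_1^{(n)}(\A),E_2^{(n)}(\A)\big)_\theta=E^{(n)}(\A)$, then descend via products---is the paper's strategy, and your ``conversely'' direction (factor $x\in E(\A)$ as $\prod_k x_k$ with $x_k\in E^{(n)}(\A)$, pick $F_k\in\F_0$ witnessing \eqref{eq:hardy-n}, multiply to get $F=\prod F_k\in\F_0(E_1(\A),E_2(\A))$) is exactly what the paper does for the bound $\|x\|_{(E_1(\A),E_2(\A))_\theta}\le B\|x\|_E$. The paper makes explicit the ingredient you leave implicit: the factorization $E(\A)=E^{(n)}(\A)\odot\cdots\odot E^{(n)}(\A)$, which is Theorem~4 of \cite{BM} combined with \eqref{saito}.

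Where you diverge is the other inequality. You propose to prove $A\|x\|_E\le\|x\|_{(E_1(\A),E_2(\A))_\theta}$ by taking an arbitrary $f\in\F_0(E_1(\A),E_2(\A))$ and factoring it as $f=\prod_k f_k$ with each $f_k$ an admissible function for the pair $(E_1^{(n)}(\A),E_2^{(n)}(\A))$. You correctly flag this as the ``main obstacle'': factoring an $\A$-valued analytic function on the strip into Hardy-space-valued pieces is a genuinely delicate noncommutative question, and the outer/Saito machinery you invoke does not obviously deliver it (the factorization in Theorem~\ref{thm:interpolation-colderon} is pointwise in the commutative lattice, which has no direct analogue here). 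The paper simply sidesteps this: since $(E_1(\A),E_2(\A))_\theta\subset(E_1(\M),E_2(\M))_\theta=E(\M)$ by Theorem~\ref{thm:interpolation}, and $E_j(\A)\subset H_s(\A)$ for small $s$ by Lemma~4.5 of \cite{D}, one gets $(E_1(\A),E_2(\A))_\theta\subset E(\M)\cap H_s(\A)=E(\A)$ directly from \eqref{saito}, with the norm inequality following automatically. No analytic-function factorization is needed for this direction at all; your obstacle is an artifact of the route you chose, not of the problem.
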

\begin{proof}
Using Theorem \ref{thm:interpolation}, we deduce that $(E_1(\M),E_2(\M))_\theta=E(\M)$. Hence,
$$
(E_1(\A),E_2(\A))_\theta\subset(E_1(\M),E_2(\M))_\theta=E(\M).
$$
 Using
 Lemma 4.5 in \cite{D}, we deduce that
 $E_1(\A),E_2(\A) \subset H_s(\A)$, where $0<s<\min\{s_1,s_2\}$. This implies
 $$
 (E_1(\A),E_2(\A))_\theta\subset H_s(\A).
 $$
Since $E$  has order continuous norm, by
 \eqref{saito}, we obtain that
\beq\label{eq:interpolation-inclusion}
(E_1(\A),E_2(\A))_\theta\subset E(\A).
\eeq
Hence, by Theorem \ref{thm:interpolation},
\beq\label{isometry1}
\|a\|_E\le \frac{1}{A}\|a\|_{(E_1(\A),E_2(\A))_\theta},\quad \forall a\in(E_1(\A),E_2(\A))_\theta.
\eeq
 Chose $n\in\mathbb{N}$ such that $n\min\{s_1,s_2\}>1$. By Corollary \ref{cor:interpolation-n}, we have  $E^{(n)}=(E_1^{(n)},E_2^{(n)})_\theta$. Since $1< p_{E_j^{(n)}}\le q_{E_j^{(n)}} <\8\;(j=1,\;2)$, using Theorem 6 in \cite{BM} and \eqref{saito}, we get
\beq\label{eq:interpolation2}
(E_1^{(n)}(\A),E_2^{(n)}(\A))_\theta=E^{(n)}(\A).
\eeq
 It is clear that $E=E^{(n)}\odot E^{(n)}\odot\cdots \odot E^{(n)}$.  Consequently, from  the proof of Theorem 4 in \cite{BM} and \eqref{saito} it follows that
\beq\label{eq:product2}
E(\A)=E^{(n)}(\A)\odot E^{(n)}(\A)\odot\cdots \odot E^{(n)}(\A).
 \eeq
Let $x\in E(\A)$. By \eqref{eq:product2}, for $\varepsilon>0$, there exist $x_k\in E^{(n)}(\A)\;(k=1,2,\cdots,n)$ such that $x=\prod_{k=1}^nx_k$ and $\prod_{k=1}^n\|x_k\|_{E^{(n)}}<\|x\|_{E}+\varepsilon$. By \eqref{eq:interpolation2}, there exist $F_k\in\mathcal{F}(E_{1}^{(n)}(\A),E_{2}^{(n)}(\A))$ such that $F_k(\theta)=x_k$ and
$$
\|F_k\|_{\mathcal{F}(E_{1}^{(n)}(\A),E_{2}^{(n)}(\A))}=\max\{\sup_{t\in\mathbb{R}}\|F_k(it)\|_{E_{1}^{(n)}},\;\sup_{t\in\mathbb{R}}\|F_k(1+it)\|_{E_{2}^{(n)}}\}<C\|x_k\|_{E^{(n)}}+\varepsilon
$$
$( k=1, 2\cdots,n)$, where $C$ is constant.
Put $F=\prod_{k=1}^nF_k$. Then
\beq\label{eq:product-inclusion}
F\in\mathcal{F}(E_{1}(\A),E_{2}(\A)), \qquad F(\theta)=x.
\eeq
By Proposition \ref{pro:noncommutative-ponitwise product},
\be\begin{array}{rl}
     \|F\|_{\mathcal{F}(E_{1}(\A),E_{2}(\A))} &=\max\{\sup_{t\in\mathbb{R}}\|F(it)\|_{E_{1}},\;\sup_{t\in\mathbb{R}}\|F(1+it)\|_{E_{2}}\}  \\
      & \le C'\max\{\sup_{t\in\mathbb{R}}\prod_{k=1}^n\|F_k(it)\|_{E_1^{(n)}},\prod_{k=1}^n\|F_k(1+it)\|_{E_2^{(n)}}\}\\
      &\le C'\max\{\sup_{t\in\mathbb{R}}\prod_{k=1}^n\|F_k(it)\|_{E_1^{(n)}},\;\sup_{t\in\mathbb{R}}\prod_{k=1}^n\|F_k(1+it)\|_{E_2^{(n)}}\}\\
      &<C'\prod_{k=1}^n(C\|x\|_{E^{(n)}}+\varepsilon).
   \end{array}
\ee
Hence,
$$
 \|x\|_{(E_1(\A),E_2(\A))_\theta}<C'\prod_{k=1}^n(C\|x\|_{E^{(n)}}+\varepsilon).
 $$
 Letting $\varepsilon\rightarrow0$, we obtain that $ \|x\|_{(E_1(\A),E_2(\A))_\theta} \le C'C^n\|x\|_{E}$. Hence,
\be
E(\A)\subset(E_1(\A),E_2(\A))_\theta.
\ee
This completes the proof.
\end{proof}

Using (ii) of Proposition \ref{thm:product-calderon} and Theorem \ref{thm:interpolation}, the same way as in in \cite[Theorem 4]{BM}, we obtain the following result.

\begin{corollary}\label{cor:product-hardy} Let $E_j$ be a symmetric quasi Banach function space on $(0,\alpha)$ with order continuous norm which is
$s_j$-convex for some $0 < s_j < \8$, $j=1,\;2$ and $0<\theta<1$.  If
 $q_{E_1}, q_{E_2}<\8$,
then
$$
(E_1\A),E_2(\A))_\theta=E_1^{(\frac{1}{1-\theta})}(\A)\odot E_2^{(\frac{1}{\theta})}(\A).
$$
\end{corollary}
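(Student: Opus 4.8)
The plan is to peel the statement into three identifications that are already available in the paper and then compose them, reducing the Hardy-space problem to the function-space results of Section~3. Put $E=E_1^{1-\theta}E_2^{\theta}$. First I would record, via part~(i) of Proposition~\ref{thm:product-calderon}, that $E=E_1^{(\frac{1}{1-\theta})}\odot E_2^{(\frac{1}{\theta})}$; since each $E_j$ has order continuous norm, the Remark following Corollary~\ref{cor:interpolation-n} shows that $E$ has order continuous norm as well, so Theorem~\ref{thm:interpolation-colderon} applies and gives $(E_1,E_2)_\theta=E$ with equivalence of quasi-norms. The convexity and Boyd-index data needed afterwards are inherited by the convexified factors: $E_j^{(p)}$ is $ps_j$-convex with $q_{E_j^{(p)}}=p\,q_{E_j}<\infty$, and by Corollary~\ref{cor:product} the spaces $E_1^{(\frac{1}{1-\theta})}$ and $E_2^{(\frac{1}{\theta})}$ may be renormed as symmetric quasi-Banach spaces; part~(ii) of Proposition~\ref{thm:product-calderon} is the bookkeeping device that lets one slide convexifications in and out of pointwise products.

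Next I would move to the noncommutative Hardy spaces in two steps. Since $q_{E_1},q_{E_2}<\infty$ and $E=(E_1,E_2)_\theta$ has order continuous norm, Theorem~\ref{interpolationhardy-finite} gives $(E_1(\A),E_2(\A))_\theta=E(\A)$ up to constants. Then, applying Theorem~4 of \cite{BM} together with \eqref{saito} to the two symmetric quasi-Banach spaces $E_1^{(\frac{1}{1-\theta})}$ and $E_2^{(\frac{1}{\theta})}$, whose pointwise product is exactly $E$, yields $E(\A)=E_1^{(\frac{1}{1-\theta})}(\A)\odot E_2^{(\frac{1}{\theta})}(\A)$. Chaining the two displays produces the claimed identity with equivalent quasi-norms; if Theorem~4 of \cite{BM} is stated only for a pair of factors, the same conclusion is reached by factoring $E=E^{(n)}\odot\cdots\odot E^{(n)}$ and iterating, exactly as in the proof of Theorem~\ref{interpolationhardy-finite}.

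The genuinely delicate point, which I expect to be the main obstacle, is hypothesis matching rather than any new idea: Theorem~\ref{interpolationhardy-finite} (and, presumably, Theorem~4 of \cite{BM}) is phrased for \emph{fully} symmetric quasi-Banach spaces with finite upper Boyd index, so one must verify that an $s_j$-convex symmetric quasi-Banach space with order continuous norm meets that requirement — after the standard reduction to a suitable convexification $E_j^{(n)}$ ($n s_j\ge1$), which is a separable, hence fully symmetric, symmetric Banach lattice — and that the renorming in Corollary~\ref{cor:product} keeps the factors inside the class to which Theorem~4 of \cite{BM} applies. Once these compatibility checks are in place, the proof is a pure composition of Proposition~\ref{thm:product-calderon}, Theorem~\ref{thm:interpolation-colderon}, Theorem~\ref{interpolationhardy-finite} and Theorem~4 of \cite{BM}, with the only remaining effort being to track the multiplicative constants through the chain.
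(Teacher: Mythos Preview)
Your proposal is correct and follows essentially the same route as the paper: the corollary is placed immediately after Theorem~\ref{interpolationhardy-finite} and the paper's one-line justification reads ``Using Theorem~4 in \cite{BM}, (ii) of Proposition~\ref{thm:product-calderon} and Theorem~\ref{thm:interpolation-colderon}'', i.e.\ exactly the chain $(E_1(\A),E_2(\A))_\theta=E(\A)$ via Theorem~\ref{interpolationhardy-finite}, then $E=E_1^{1-\theta}E_2^\theta=E_1^{(1/(1-\theta))}\odot E_2^{(1/\theta)}$ via Theorem~\ref{thm:interpolation-colderon} and Proposition~\ref{thm:product-calderon}, and finally the Hardy-space factorization of $E(\A)$ via Theorem~4 of \cite{BM}. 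Your care about the ``fully symmetric'' hypothesis is well placed --- the paper glosses over this, and your resolution through the convexification $E_j^{(n)}$ (separable, hence fully symmetric) is the natural one.
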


\subsection{Semifinite case}
In this subsection  $\M$ always denotes a semifinite von Neumann algebra  with a normal semi-finite faithful trace $\tau$ satisfying $\tau(1)=\8$. We keep all notations introduced in the third section.

Given a projection  $e$ in $\mathcal{D}$, we let
\be
\mathcal{M}_{e}=e\mathcal{M}e,\quad \mathcal{A}_{e}=e\mathcal{A}e,\quad
\mathcal{D}_{e}=e\mathcal{D}e,
\ee
and  $\mathcal{E}_{e}$ be the restriction of $\E$  to $\mathcal{M}_{e}$.
 Then
  $\mathcal{A}_{e}$ is a subdiagonal algebra of
 $\mathcal{M}_{e}$ with respect to $\mathcal{E}_{e}$ and with diagonal
 $\mathcal{D}_{e}$.

Since  $\mathcal{D}$ is semifinite, we can choose an increasing family of
$\{e_{i}\}_{i\in I}$ of $\tau$-finite projections in $\mathcal{D}$ such that
$e_{i}\rightarrow1$  strongly, where 1 is identity of $\mathcal{M}$ (see Theorem 2.5.6 in \cite{Sa}). Throughout, the $\{e_{i}\}_{i\in I}$ will be used to indicate this net.

\begin{lemma}\label{convergence}
Let $E$ be  a  symmetric quasi Banach  function space on $(0,\8)$ which is
$s$-convex for some $0 < s < \8$. Suppose that  $0<p<  p_E\le q_E<q<\8$.
If  $(a_i)_{i\in I}$ is a bounded net in $\M$ converging strongly to $a$, then $xa_i\rightarrow xa$ in the norm of
$E(\M)$ for any $x\in L_p(\M)\cap \M$.
\end{lemma}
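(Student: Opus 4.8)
The goal is to show that for a bounded net $(a_i)_{i\in I}$ in $\M$ converging strongly to $a$, we have $xa_i \to xa$ in $E(\M)$ for every $x \in L_p(\M)\cap\M$. The natural strategy is an interpolation-type argument: control the convergence in $L_p(\M)$ directly, control boundedness in $\M$ (i.e. $L_\8(\M)$) trivially, and then use Lemma \ref{lem:interpolation} to transfer to $E(\M)$ since $p < p_E \le q_E < q < \8$. First I would reduce to the case $a=0$ by replacing $a_i$ with $a_i - a$, which is still a bounded net in $\M$ converging strongly to $0$ (note $q$ may be taken $<\8$ as in the hypothesis, or the $q=\8$ branch of Lemma \ref{lem:interpolation} applies if one prefers; either way the lemma is available). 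Set $M = \sup_i \|a_i\|_\8 < \8$.

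Next I would establish the $L_p$-convergence: $\|x a_i\|_{L_p(\M)} \to 0$. This is the analytic heart of the argument. Since $x \in L_p(\M)\cap\M$, write $x = u|x|$ with $|x| \in L_p(\M)$, and observe $\|xa_i\|_p = \| |x| a_i \|_p$ (using $\|u y\|_p \le \|y\|_p$ and boundedness of $u$). Strong convergence $a_i \to 0$ gives $a_i^* \xi \to 0$ for every $\xi \in \mathcal H$; applying this to $\xi$ ranging over a suitable dense set coming from $|x|^p$ (more precisely, approximating $|x|$ by $|x|\chi_{(\e,\8)}(|x|)$, a bounded operator lying in $L_p(\M)$ since $\tau(\chi_{(\e,\8)}(|x|)) < \8$), one reduces to showing $\| y a_i\|_p \to 0$ for $y$ in $L_p(\M)\cap\M$ with $\tau(\operatorname{supp}|y|)<\8$; for such $y$ a standard computation with the trace and the spectral decomposition, together with dominated convergence (dominated by $M\|y\|_p$) and strong convergence, yields $\| y a_i \|_p \to 0$. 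The approximation error $\| (|x| - |x|\chi_{(\e,\8)}(|x|)) a_i \|_p \le M \| |x|\chi_{(0,\e]}(|x|)\|_p$ is made small uniformly in $i$ because $|x| \in L_p$. This $\e$-argument is the step I expect to be the main obstacle, since it requires care with noncommutative truncations and the interplay between strong-operator convergence and $L_p$-norm convergence.

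Finally, with $c_i := \|x a_i\|_{L_p(\M)} \to 0$ and $\|x a_i\|_{\M} \le \|x\|_\8 M$ bounded in hand, I would package this as an operator statement to invoke Lemma \ref{lem:interpolation}: for each fixed $i$, consider the rank-one-type map, or more simply just apply the quantitative estimate
$$
\|x a_i\|_{E(\M)} \le C \max\{\|x a_i\|_{L_p(\M)},\, \|x a_i\|_{L_q(\M)}\}
$$
which follows from Lemma \ref{lem:interpolation} applied to the identity operator regarded as $L_p(\M)+L_q(\M) \to L_p(\M)+L_q(\M)$ restricted appropriately, or equivalently from the fact that $E \in \mathrm{Int}(L_p,L_q)$ gives $\|z\|_{E(\M)} \le C\max\{\|z\|_{p},\|z\|_{q}\}$ for $z \in L_p(\M)\cap L_q(\M)$ with $\|z\|_q$ controlled by interpolation between $\|z\|_p$ and $\|z\|_\8$. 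Since $\|xa_i\|_q \le \|xa_i\|_p^{p/q}\|xa_i\|_\8^{1-p/q} \le c_i^{p/q}(\|x\|_\8 M)^{1-p/q} \to 0$, both entries in the max tend to $0$, hence $\|x a_i\|_{E(\M)} \to 0$, which is the desired conclusion.
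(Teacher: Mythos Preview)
Your approach is correct and follows the same two-step skeleton as the paper: establish $xa_i\to xa$ in both $L_p(\M)$ and $L_q(\M)$, then use the continuous embedding $L_p(\M)\cap L_q(\M)\hookrightarrow E(\M)$ to conclude. The paper is simply more economical: rather than reproving the $L_p$-convergence by a spectral truncation argument and then interpolating to $L_q$, it directly invokes Lemma~2.3 of Junge~\cite{Ju} (which gives $xa_i\to xa$ in $L_r(\M)$ for any $r$ with $x\in L_r(\M)$, applied at $r=p$ and $r=q$ since $x\in L_p(\M)\cap\M\subset L_p(\M)\cap L_q(\M)$), and invokes the continuous inclusion from Lemma~4.5 of~\cite{D} in place of your somewhat roundabout appeal to Lemma~\ref{lem:interpolation}.
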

\begin{proof}  By Lemma 4.5 in \cite{D}, it follows that
\beq\label{inclutionp-q}
L_p(\M)\cap L_q(\M)\subset E(\M)\subset L_p(\M)+ L_q(\M)
\eeq
with continuous inclusions.
On the other hand, if $x\in  L_p(\M)\cap \M\subset L_p(\M)\cap L_q(\M)$, by Lemma 2.3 in \cite{Ju}, we have that $xa_i\rightarrow xa$ in the norm of
$L_p(\M)$  and $xa_i\rightarrow xa$ in the norm of
$L_q(\M)$. Using \eqref{inclutionp-q} we obtain the desired result.

\end{proof}

\begin{lemma}\label{semi-closure}
Let $E$ be  a  symmetric quasi Banach function space on $(0,\8)$ which is
$s$-convex for some $0 < s < \8$. If  $E$  has order continuous norm and $ q_E <\8$, then
$$
\lim_{i}\|e_ixe_i-x\|_E=0,\qquad \forall x\in E(\M).
$$
\end{lemma}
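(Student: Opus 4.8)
The plan is to prove $\lim_i\|e_ixe_i-x\|_E=0$ for all $x\in E(\M)$ by a standard three-epsilon argument, reducing the general case to a dense subclass where strong convergence of $(e_i)$ can be exploited via the previous lemma. First I would fix $p,q$ with $0<p<p_E\le q_E<q<\8$ (possible since $p_E\ge s>0$ by $s$-convexity and $q_E<\8$ by hypothesis), so that by Lemma 4.5 in \cite{D} we have the continuous inclusions $L_p(\M)\cap L_q(\M)\subset E(\M)\subset L_p(\M)+L_q(\M)$. The natural dense subset to work with is $\D_0:=L_p(\M)\cap\M$, which is dense in $E(\M)$ because $E$ has order continuous norm: finite-rank-like truncations $e_{(1/n,n)}(|x|)\,x\,e_{(1/n,n)}(|x|)$ (or an analogous spectral truncation) approximate $x$ in $E(\M)$ and lie in $L_p(\M)\cap\M$.

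The core estimate is for $x\in\D_0$. Write $e_ixe_i-x=e_ix(e_i-1)+(e_i-1)x$. For the first term, $x\in L_p(\M)\cap\M$ and $(e_i)$ is bounded in $\M$ converging strongly to $1$, so by Lemma \ref{convergence} applied to $a_i=e_i\to1$ we get $x e_i\to x$ in $E(\M)$; multiplying on the left by the contractions $e_i$ and using $\|e_i y\|_E\le\|y\|_E$ (ideal property, since $\mu(e_iy)\le\mu(y)$) gives $\|e_ix(e_i-1)\|_E\le\|xe_i-x\|_E\to0$. For the second term, $(e_i-1)x=((x^*(e_i-1))^*$; apply Lemma \ref{convergence} to $x^*\in L_p(\M)\cap\M$ and $a_i=e_i\to1$ to get $x^*e_i\to x^*$, hence $x^*(e_i-1)\to0$ in $E(\M)$, and since the generalized singular numbers are $*$-invariant ($\mu(y^*)=\mu(y)$) we conclude $\|(e_i-1)x\|_E\to0$. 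Combining the two via the quasi-triangle inequality (with the $s$-convexity constant as quasi-norm modulus) yields $\|e_ixe_i-x\|_E\to0$ for $x\in\D_0$.

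Finally I would remove the restriction $x\in\D_0$. Given general $x\in E(\M)$ and $\e>0$, pick $y\in\D_0$ with $\|x-y\|_E<\e$. Using the quasi-triangle inequality with modulus $C$ (so $\|a+b+c\|_E\le C^2(\|a\|_E+\|b\|_E+\|c\|_E)$ or the appropriate $p$-norm version), write $e_ixe_i-x=e_i(x-y)e_i+(e_iye_i-y)+(y-x)$ and bound $\|e_i(x-y)e_i\|_E\le\|x-y\|_E<\e$ using the ideal property twice, $\|y-x\|_E<\e$, and $\|e_iye_i-y\|_E<\e$ for $i$ large by the previous paragraph; this gives $\limsup_i\|e_ixe_i-x\|_E\le C^2\cdot 3\e$ (or the analogous constant), and letting $\e\to0$ finishes the proof.

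The main obstacle I anticipate is the density of $L_p(\M)\cap\M$ in $E(\M)$: one must justify that spectral truncations of an arbitrary $x\in E(\M)$ converge to $x$ in the $E$-norm, which is exactly where order continuity of the norm of $E$ is essential — without it the truncations need not converge. A secondary technical point is bookkeeping the quasi-norm constants carefully (via the Aoki–Rolewicz $p$-norm for $E$ together with the $s$-convexity) so that the triple splitting does not accumulate an unbounded constant; this is routine but must be stated cleanly.
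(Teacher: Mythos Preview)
Your proposal is correct and follows essentially the same route as the paper: a three-epsilon reduction to a dense subclass $L_p(\M)\cap\M$, then Lemma~\ref{convergence} (plus the adjoint trick $\mu(y^*)=\mu(y)$ for the left-multiplication term, which the paper uses tacitly). The one point worth noting is the density step you flag as the main obstacle: rather than arguing directly via spectral truncations and order continuity, the paper passes to the convexification $E^{(n)}$ with $ns>1$, which is a genuine symmetric \emph{Banach} space with order continuous norm, invokes Lemma~4.5 of \cite{X} to get $L_1(\M)\cap\M$ dense in $E^{(n)}(\M)$, and then reads off that $L_{1/n}(\M)\cap\M$ is dense in $E(\M)$; this sidesteps the bookkeeping you anticipated.
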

\begin{proof} Since $E^{(\frac{1}{s})}$ can be renormed as a symmetric Banach function space, if we chose $n\in\mathbb{N}$ such that $ns>1$, then  $E^{(n)}$ is a symmetric Banach function space with order continuous norm. By Lemma 4.5 in \cite{X}, $L_1(\M)\cap\M$ is norm dense in $E^{(n)}(\M)$, so we get  $L_{\frac{1}{n}}(\M)\cap\M$ is norm dense in $E(\M)$. Let $x\in E(\M)$. Then for any $\varepsilon>0$, there
exists $y\in L_\frac{1}{n}(\M)\cap\M$ such that
$\|x-y\|_{E(\M)}<\frac{\varepsilon}{3K^2}$, where $K$ is quasi norm constant of $\|\cdot\|_E$. It is clear that $\frac{1}{n}<s\le  p_E$, by Lemma \ref{convergence}, we get
$\lim_{i}\|ye_{i}- y\|_{E}=0$ and $\lim_{i}\|e_{i}y-y\|_{E}=0$. Hence
$$
\lim_{i}\|y-e_{i}ye_{i}\|_{E}\leq K[\lim_{i}\|y-ye_{i}\|_{E}+\lim_{i}\|(y-e_{i}y)e_{i}\|_{E}]=0.
$$
Thus there exists $i_{0}\in I$ such that
$$\|y-e_{i_{0}}ye_{i_{0}}\|_{E(\M}<\frac{\varepsilon}{3K^2},\qquad i\ge i_0.$$
Hence, for $i\ge i_0$,
$$
\|x-e_{i_{0}}xe_{i_{0}}\|_{E}\le K^2[\|x-y\|_{E}+\|e_{i_{0}}xe_{i_{0}}-e_{i_{0}}y e_{i_{0}}\|_{E}+\|y-e_{i_{0}}ye_{i_{0}}\|_{E}]<\varepsilon.
$$
Thus $\lim_{i}e_{i}xe_{i}=x$.
\end{proof}

\begin{lemma}\label{lem:saito-type} Let $E$ be a  symmetric quasi Banach function space on $(0,\8)$ which is
$s$-convex for some $0 < s < \8$. Suppose that  $0<p<  p_E\le q_E<q<\8$. If $E$ has order continuous norm, then
\beq\label{eq:saito-type}
E(\A)=(H_p(\A)+H_q(\A))\cap E(\M).
\eeq
\end{lemma}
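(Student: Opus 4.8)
The plan is to prove the two inclusions of \eqref{eq:saito-type} separately, reducing everything to the Banach-space case via the $s$-convexity trick used repeatedly above.

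\textbf{The inclusion $E(\A)\subseteq (H_p(\A)+H_q(\A))\cap E(\M)$.} First I would note that $E(\A)\subseteq E(\M)$ by definition, so it suffices to show $E(\A)\subseteq H_p(\A)+H_q(\A)$. By Lemma 4.5 in \cite{D} (applied also in \eqref{inclutionp-q} and in Lemma \ref{semi-closure}) we have the continuous inclusion $E(\M)\subseteq L_p(\M)+L_q(\M)$, and hence $\A\cap E(\M)\subseteq L_p(\M)+L_q(\M)$. For an element $a\in\A\cap E(\M)$, writing $a=a_p+a_q$ with $a_p\in L_p(\M),\,a_q\in L_q(\M)$ is not yet enough, since we need the summands to lie in the Hardy spaces; here I would invoke the fact (consequence of maximality of the semifinite subdiagonal algebra, as used to prove \eqref{saito} and in Lemma 4.5 of \cite{D}) that the analytic projection / the structure of $H_p(\A)+H_q(\A)$ allows one to split any element of $\A$ lying in $L_p(\M)+L_q(\M)$ into Hardy-space pieces. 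More cleanly: $\A\cap(L_p(\M)+L_q(\M))\subseteq H_p(\A)+H_q(\A)$ because $\A\cap L_r(\M)$ is dense in $H_r(\A)$ for each finite $r$, and an element of $\A$ is already "analytic." Then, since $H_p(\A)+H_q(\A)$ is closed in $L_p(\M)+L_q(\M)$ (a standard fact for interpolation couples of Hardy spaces, cf. \cite{B, PX}), taking the $E(\M)$-closure of $\A\cap E(\M)$ and using the continuity of $E(\M)\hookrightarrow L_p(\M)+L_q(\M)$ gives $E(\A)\subseteq \overline{H_p(\A)+H_q(\A)}^{\,L_p+L_q}\cap E(\M)=(H_p(\A)+H_q(\A))\cap E(\M)$.

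\textbf{The inclusion $(H_p(\A)+H_q(\A))\cap E(\M)\subseteq E(\A)$.} This is the substantive direction. Let $x\in(H_p(\A)+H_q(\A))\cap E(\M)$. The strategy is to approximate $x$ in $E(\M)$-norm by elements of $\A\cap E(\M)$. First, by Lemma \ref{semi-closure}, $e_ixe_i\to x$ in $E(\M)$-norm (here we use that $E$ has order continuous norm and $q_E<\infty$; note $p_E\ge s$). Each $e_ixe_i$ lives in the finite corner $\M_{e_i}=e_i\M e_i$, where $\A_{e_i}$ is a \emph{finite} subdiagonal algebra, and $e_ixe_i\in(H_p(\A_{e_i})+H_q(\A_{e_i}))\cap E(\M_{e_i})$. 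In the finite case, \eqref{saito} gives $E(\A_{e_i})=H_s(\A_{e_i})\cap E(\M_{e_i})$, and one checks $H_p(\A_{e_i})+H_q(\A_{e_i})\subseteq H_s(\A_{e_i})$ (since on a finite algebra $H_r\subseteq H_s$ for $r\ge s$, as $L_r\subseteq L_s$), whence $e_ixe_i\in H_s(\A_{e_i})\cap E(\M_{e_i})=E(\A_{e_i})\subseteq E(\A)$. Therefore $x=\lim_i e_ixe_i$ lies in the $E(\M)$-closure of $E(\A)$, which is $E(\A)$ itself. The one point requiring care is that $E(\A_{e_i})\subseteq E(\A)$ as subspaces of $E(\M)$: this holds because $\A_{e_i}\cap E(\M_{e_i})\subseteq \A\cap E(\M)$ and the inclusion $E(\M_{e_i})\hookrightarrow E(\M)$ is isometric, so closures are respected.

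\textbf{Main obstacle.} The delicate step is the density/splitting assertion in the first inclusion — that an analytic element lying in $L_p(\M)+L_q(\M)$ can be decomposed with both summands analytic, equivalently that $H_p(\A)+H_q(\A)$ is the "right" closure of $\A$ inside $L_p(\M)+L_q(\M)$ — together with the closedness of $H_p(\A)+H_q(\A)$ in $L_p(\M)+L_q(\M)$. I expect this to follow by combining the finite-case identity \eqref{saito}, Lemma \ref{semi-closure}, and the semifinite Hardy-space machinery of \cite{B}, exactly as in the proof of Lemma 4.5 in \cite{D}; a self-contained argument would mirror the two-step "truncate to $\M_{e_i}$, apply the finite case, pass to the limit" scheme already used above. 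Everything else (order continuity, $p_E\ge s$, the Aoki–Rolewicz/$s$-convexification reductions) is routine given the results collected earlier in the paper.
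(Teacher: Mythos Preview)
Your second inclusion is exactly the paper's argument. For the first inclusion, the paper does not attempt the direct analytic-splitting you initially sketch; it goes straight to the truncation scheme you propose in your ``Main obstacle'' paragraph: for $x_n\in\A\cap E(\M)$ one has $e_ix_ne_i\in\A_{e_i}\cap(L_p(\M_{e_i})+L_q(\M_{e_i}))=\A_{e_i}\cap L_p(\M_{e_i})\subset H_p(\A_{e_i})\subset H_p(\A)+H_q(\A)$ (the finite trace collapses $L_p+L_q$ to $L_p$, so no decomposition is ever needed), then $e_ix_ne_i\to x_n$ in $L_p+L_q$ and $x_n\to x$ in $L_p+L_q$ give $x\in H_p(\A)+H_q(\A)$ by closedness (automatic, since $H_p(\A)$ and $H_q(\A)$ are complete). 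Once you commit to truncation, your proposal and the paper's proof coincide.
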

\begin{proof} We claim that
\beq\label{eq:inclustion-semi-hardy}
E(\A)\subseteq H_p(\A)+H_q(\A).
\eeq
Indeed, if $x\in E(\A)$, then there exists a sequence $(x_n)\subset\A\cap E(\M)$ such that $\|x-x_n\|_E\rightarrow0$ as $n\rightarrow\8$.
By \eqref{inclutionp-q}, $\|x-x_n\|_{L_p(\M)+L_q(\M)}\rightarrow0$ as $n\rightarrow\8$. On the other hand, $x_n\in\A\cap E(\M)\subset\A\cap(L_p(\M)+L_q(\M))$ for $n\in \mathbb{N}$. Hence,
$$
e_ix_ne_i\in\A_{e_i}\cap(L_p(\M_{e_i})+L_q(\M_{e_i}))\subset H_p(\A_{e_i})\subset H_p(\A)+H_q(\A),\quad \forall i\in I.
$$
Since $\lim_{i}\|x_n-e_ix_ne_i\|_{L_p(\M)+L_q(\M)}=0$, we get $x_n\in H_p(\A)+H_q(\A)$ for $n\in \mathbb{N}$. Therefore, $x\in H_p(\A)+H_q(\A)$, which gives \eqref{eq:inclustion-semi-hardy}. Thus $E(\A)\subset(H_p(\A)+H_q(\A))\cap E(\M)$.

Conversely, if $y\in (H_p(\A)+H_q(\A))\cap E(\M)$,
then
$$
e_iye_i\in (H_p(\A_{e_i})+H_q(\A_{e_i}))\cap E(\M_{e_i})\subset H_p(\A_{e_i})\cap E(\M_{e_i}),\quad \forall i\in I.
$$
By \eqref{saito}, we have that
$$
e_iye_i\in E(\A_{e_i})\subset E(\A),\quad \forall i\in I.
$$
On the other hand, since $E$ has order continuous norm, by Lemma \ref{semi-closure}, we have that
$\lim_{i}\|e_iye_i- y\|_{E}=0$.
Thus $y\in E(\A)$. This gives the desired result.
\end{proof}

\begin{theorem}\label{interpolationhardy-semi-finite}
Let $E_j$ be a symmetric quasi Banach function space on $(0,\alpha)$ with order continuous norm which is
$s_j$-convex for some $0 < s_j < \8$ $(j=1,\;2)$. Suppose $0<\theta<1$ and $E=E_1^{1-\theta}E_2^\theta$. If
 $q_{E_1},\; q_{E_2}<\8$, then
$$
(E_1(\A),E_2(\A))_\theta=E(\A)
$$
with equivalent norms.
\end{theorem}
\begin{proof}
 By \eqref{eq:interpolation-product}, we deduce that
$$
(E_1(\A),E_2(\A))_\theta\subset(E_1(\M),E_2(\M))_\theta=E(\M).
$$
Let $0<p< \min\{p_{E_1},p_{E_2}\}\le \max\{q_{E_1},q_{E_2}\}<q<\8$. Then by \eqref{eq:inclustion-semi-hardy},
$$
E_j(\A)\subset H_p(\A)+ H_q(\A),\qquad j=1,\;2.
$$
Hence,
$$
(E_1(\A),E_2(\A))_\theta\subset E_1(\A)+E_2(\A) \subset H_p(\A)+ H_q(\A).
$$
By Lemma \ref{lem:saito-type},
$$
(E_1(\A),E_2(\A))_\theta\subset (H_p(\A)+ H_q(\A))\cap E(\M)=E(\A).
$$
Hence, by Theorem \ref{thm:interpolation}
\beq\label{isometry-semi-1}
\|a\|_{E}\le\frac{1}{A}\|a\|_{(E_1(\A),E_2(\A))_\theta},\quad \forall a\in(E_1(\A),E_2(\A))_\theta.
\eeq
Using Theorem \ref{interpolationhardy-finite}, we get that
 $$
(E_1(\A_{e_i}),E_2(\A_{e_i}))_\theta=E(\A_{e_i}),\qquad \forall i\in I.
 $$
Hence,
 $$
E(\A_{e_i})=(E_1(\A_{e_i}),E_2(\A_{e_i}))_\theta\subset (E_1(\A),E_2(\A))_\theta
 $$
 and
 \beq\label{inequality-semi}
  \|a\|_{(E_1(\A),E_2(\A))_\theta}\le B\|a\|_{E},\qquad \forall a\in E(\A_{e_i}),
 \eeq
 where $B$ does not depends to $i\in I$.
  Let $a\in E(\A)$. By Lemma \ref{semi-closure},
 $$
\lim_{i}\| a -e_iae_i\|_{E}=0.
 $$
Since $\{e_{i}\}_{i\in I}$ is increasing and $I$ is  a directed set, using \eqref{inequality-semi} we obtain that
$$
\|e_iae_i-e_jae_j\|_{(E_1(\A),E_2(\A))_\theta}\le B\|e_iae_i-e_jae_j\|_{E},\qquad  \forall i,\;j\in I.
$$
Hence $\{e_iae_i\}_{i\in I}$ is a Cauchy net  in $(E_1(\A),E_2(\A))_\theta$.
 So, there exists  $y$ in $(E_1(\A),E_2(\A))_\theta$ such that
$$
\lim_{i}\|e_iae_i-y\|_{(E_1(\A),E_2(\A))_\theta}=0.
$$
By \eqref{isometry-semi-1}, $\lim_{i}\|e_iae_i-y\|_{E}=0$. Therefore, $y=a$. We  deduce that
$$
  \|a\|_{(E_1(\A),E_2(\A))_\theta}\le B\|a\|_{E},\quad \forall a\in E(\A).
$$
Thus $(E_1(\A),E_2(\A))_\theta=E(\A)$, and so the proof is complete.
\end{proof}

Let $E_j$ be a symmetric quasi Banach function space on $(0,\alpha)$ with  $0 < p_{E_j}\le q_{E_j} < \8$, $j=1,\;2$.  We choose $p,\;q$ such that $0<p< \min\{p_{E_1},p_{E_2}\}\le \max\{q_{E_1},q_{E_2}\}<q<\8$.
Set
\beq\label{eq:hardy-cordalon}
E_1(\A)^{1-\theta}E_2^{\theta}(\A)=E_1(\M)^{1-\theta}E_2^{\theta}(\M)\cap(H_p(\A)+H_q(\A)),\quad  0<\theta<1.
\eeq
By Lemma \ref{lem:saito-type}, Proposition \ref{pro:noncommutative-calderon} and Theorem \ref{interpolationhardy-semi-finite}, we have that

\begin{corollary}
Let $E_j$ be a symmetric quasi Banach  function space on $(0,\alpha)$ with order continuous norm which is
$s_j$-convex for some $0 < s_j < \8$, $j=1,\;2$.  If
 $q_{E_1}, q_{E_2}<\8$, then
$$
(E_1(\A),E_2(\A))_\theta=E_1(\A)^{1-\theta}E_2^{\theta}(\A), \quad 0<\theta<1,
$$
with equivalent norms.
\end{corollary}

\section{Real interpolation of noncommutative symmetric Hardy spaces}

\begin{theorem}\label{K-functional for noncommutative} Let $E_j$ be a symmetric quasi Banach function space on $(0,\alpha)$ which is
$s_j$-convex for some $0 < s_j < \8,\; j=1,\;2$. Then there exist constants $A>0,\; B>0$ such that  for all $x\in E_1(\M)+E_2(\M)$
and  all $t>0$
 \beq\label{eq:K-function-equivalent}
 AK_t\big(x, E_1(\M),E_2(\M)\big)
\le K_t\big(\mu(x),E_1,E_2\big)\le BK_t\big(x, E_1(\M),E_2(\M)\big),
\eeq
where $A,\; B$  depend only on $E_1$ and $E_2$.
 \end{theorem}

\begin{proof} First suppose that $p_{E_j}>1\;(j=1,\;2)$. Let $x\in E_1(\M)+E_2(\M)$. By Lemma 4.5 in \cite{D}, $x\in L_1(\M)+L_\8(\M)$. Hence, by Theorem 2.1 in \cite{PX},
there exists a linear map  $T:\;L_1(\M)+L_\infty(\M) \to L_1(0,\alpha)+L_\infty(0,\alpha)$ (resp. $S: L_1(0,\alpha)+L_\infty(0,\alpha)\to
L_1(\M)+L_\infty(\M)$ ) such that
$T$ $($resp. $S)$ is a contraction from $L_p(\M)$
into $L_p(0,\alpha)$ (resp. from $L_p(0,\alpha)$ into $L_p(\M)$) for $p=1$ and $p=\infty$, and $Tx=\mu(x)$ (resp. $S\mu(x)=x$).  Using \cite[Theorem 4.8]{D}, we get that $T$ (resp. $S$) is a bounded map from $E_j(\M)$ to $E_j$ (resp. from $E_j$ to $E_j(\M)$) ($j=1,2$).
\be
\begin{array}{l}
     K_t\big(x, E_1(\M),E_2(\M)\big)\\
     \qquad =K_t\big(S\mu(x), E_1(\M),E_2(\M)\big)\\
     \qquad=\inf\left\{\|x_1\|_{E_1}+t\|x_2\|_{E_2}:\; S\mu(x)=x_1+x_2,\;x_j\in E_j(\M),\;j=1,\;2 \right\}  \\
      \qquad \leq \inf\left\{\|Sf_1\|_{E_1}+t\|Sf_2\|_{E_2}:\; \mu(x)=f_1+f_2,\;f_j\in E_j,\;j=1,\;2 \right\}\\
      \qquad \leq C\inf\left\{\|f_1\|_{E_1}+t\|f_2\|_{E_2}:\; \mu(x)=f_1+f_2,\;f_j\in E_j,\;j=1,\;2 \right\}\\
     \qquad =C K_t\big(\mu(x),E_1,E_2\big).
   \end{array}
\ee
Similarly, we use $T$ to prove the right inequality of \eqref{eq:K-function-equivalent}.

Now suppose that $1\ge p_{E_j}>\frac{1}{2}\;(j=1,\;2)$. Using the polar decomposition of $x$ we obtain that
 \be
 K_t\big(x, E_1(\M),E_2(\M)\big)=
 K_t\big(|x|, E_1(\M),E_2(\M)\big).
 \ee
 Hence, we assume that $x$ is positive. Let $x=x_1 +x_2$ with $x_j\in
E_j(\M)$ ($j=1,\; 2$). Then for $s>0$
 \be
 \mu_s(x)\le \mu_{s/2}(x_1)+\mu_{s/2}(x_2)\;{\mathop=^{\rm
 def}}\; f_1+f_2.
 \ee
By \cite[Lemma 2.2]{D}, we have
 $$\|f_j\|_{E_j}\le C\,\|x_j\|_{E_j}\,,\quad j=1,\;2.$$
Hence
 \be
 K_t\big(\mu(x),E_1,E_2\big)
 &\le& \|f_1\|_{E_1}+t\,\|f_2\|_{E_2}\\
 &\le&C\,\big(\|x_1\|_{E_1}+t\,
 \|x_2\|_{E_2}\big),
 \ee
so that
\be
K_t\big(\mu(x),E_1,E_2\big)
 \le C\,
  K_t\big(x, E_1(\M),E_2(\M)\big).
\ee
Conversely,
 \be\begin{array}{l}
 \big[K_t\big(\mu(x),E_1,E_2\big)\big]^{1/2}\\
 \qquad=[\inf\left\{\|f_1\|_{E_1}+t\|f_2\|_{E_2}:\; \mu(x)=f_1+f_2,\;f_j\in E_j^+,\;j=1,\;2 \right\}]^{1/2} \\
 \qquad\ge 2^{-1/2}\inf\left\{\|f_1\|_{E_1}^{1/2} +t^{1/2} \|f_2\|_{E_2}^{1/2} :\; \mu(x)^{1/2}\le f_1^{1/2} +f_2^{1/2},\;f_j \in E_j^+,\;j=1,\;2 \right\}\\
 \qquad\ge 2^{-1/2}\,K_{t^{1/2}}\big(\mu(x)^{1/2},E_1^{(2)},E_2^{(2)}\big)\\
 \qquad=2^{-1/2}\,K_{t^{1/2}}\big(\mu(x^{1/2}),E_1^{(2)},E_2^{(2)}\big).
 \end{array}
 \ee
Since $p_{E_j^{(2)}}>1\;(j=1,\;2)$, we can use the previous case to
obtain that there is a decomposition $x^{1/2}=y_1+y_2$ with $y_j\in
E_j^{(2)}(\M)^+$ ($j=1,\; 2$) such that
 \be
 \|y_1\|_{E_1^{(2)}}+t^{1/2}\,\|y_2\|_{E_1^{(2)}}\le C
K_{t^{1/2}}\big(\mu(x^{1/2}),E_1^{(2)},E_2^{(2)}\big)
 +\e
 \ee
($\e>0$ being arbitrarily given). By the operator convexity of the
map $y\mapsto y^2$, we then have
 $$x=(y_1+y_2)^2\le 2(y_1^2+ y_2^2)\;{\mathop=^{\rm def}}\;
 x_1+x_2.$$
Thus
 \be
 K_t\big(x, E_1,E_2\big)
 &\le&\|x_1\|_{E_1}+t\,\|x_2\|_{E_2}
 =2\big(\|y_1\|_{E_1^{(2)}}^{2}+t\,\|y_2\|_{E_2^{(2)}}^{2}\big)\\
 &\le& 2 \big(\|y_1\|_{E_1^{(2)}}+t^{1/2}\,\|y_2\|_{E_2^{(2)}}\big)^2.
 \ee
Putting the preceding inequalities together, we get
 \be
 K_t\big(x, E_1(\M),E_2(\M)\big)
 \le 4C^2K_t\big(\mu(x),E_1,E_2)\big).
 \ee
 If $\frac{1}{2}\ge p_{E_j}>\frac{1}{4}\;(j=1,\;2)$, then arguing as above and
using the case just proved, we can make the same conclusion as before. Finally, an
easy induction procedure allows us to finish the proof.
\end{proof}
\begin{corollary}\label{cor:interpolation-E(M)}  Let $E_j$ be a symmetric quasi Banach function space on $(0,\alpha)$ which is
$s_j$-convex for some $0 < s_j < \8,\; j=1,2$. If  $0<\theta<1$,
$0<p\le\8$ and $E=(E_1,E_2)_{\theta,p}$, then $E(\M)=(E_1(\M),E_2(\M))_{\theta,p}$ with equivalent norms.

\end{corollary}

Let $R$ be the Riesz projection (see \cite{B,MW1,R}). If  $E$ is a symmetric Banach function space on $(0,\alpha)$ with $1<p_{E} \le q_{E}<\8$ and $E$ has order continuous norm, then $R$ is a projection
from $E(\M)$ onto in $E(\A)$. Indeed, since $E$  has order continuous norm, $\M\cap L_1(\M)$ is dense in $E(\M)$. Let $1<p<p_{E} \le q_{E}<q <\8$. Then $R$ is bounded projection from $ L_{p}(\M)$
onto $ H_{p}(\M)$ and from $ L_{q}(\M)$
onto $ H_{q}(\A)$. By \cite[Theorem 4.8]{D}, we know that $R$ is a bounded map from $ E(\M)$
onto $E(\A)$.

Therefore,
 if  $E_j$ is a separable symmetric  Banach function space on $(0,\alpha)$ with  $1<p_{E_j} \le q_{E_j}<\8$ ($j=1,\;2$), then for any $x\in E_1(\A)+E_2(\A)$
\beq\label{k-funtion}
 K_t\big(x,E_1(\A),E_2(\A)\big)\le C K_t\big(x,E_1(\M),E_2(\M)\big),\quad \forall t>0,
\eeq
where $C>0$ is independent of $\alpha$.

\begin{lemma}\label{lem:sum-hardy-space} Let $E_j$ be a separable symmetric quasi Banach function space on $(0,\alpha)$ which is
$s_j$-convex for some $0 < s_j < \8\; (j=1,\;2)$. Suppose that $q_{E_j}<\8$, $s_j>r>0\; (j=1,\;2)$ and $\alpha<\8$.
\begin{enumerate}[\rm(i)]
 \item
If $x\in E_1(\M)+E_2(\M)$ is an invertible operator such that
$x^{-1}\in \M$, then there exist a unitary
$u\in\M$ and $a\in E_1(\A)+E_2(\A)$ such that $x=ua$
and $a^{-1}\in \A$.
\item
$$
E_1(\A)+E_2(\A)=\big(E_1(\M)+E_2(\M)\big)\cap H_r(\A).
$$
\end{enumerate}
\end{lemma}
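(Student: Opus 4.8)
The plan is to handle the two assertions in turn, with (i) providing the factorization engine that drives (ii). For (i), the natural approach is a Szeg\H{o}-type/outer-factorization argument combined with the semifinite approximation $\{e_i\}_{i\in I}$. Since $x^{-1}\in\M$ and $x\in E_1(\M)+E_2(\M)\subset L_r(\M)+\M$ (using Lemma 4.5 in \cite{D} together with $s_j>r$), one first truncates: $e_ixe_i$ lives in the finite corner $\M_{e_i}$, where the finite-case theory applies. In the finite subdiagonal algebra $\A_{e_i}$ one has the Riesz/outer factorization $e_ixe_i=u_i a_i$ with $u_i$ unitary in $\M_{e_i}$ and $a_i$ outer in $H_r(\A_{e_i})$ with $a_i^{-1}$ bounded (this is where the invertibility of $x$ and of $x^{-1}\in\M$ is essential, to get a uniform bound on $a_i^{-1}$ and on $\|a_i\|$ via $\|e_ixe_i\|_{E_1(\M)+E_2(\M)}$). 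Then one passes to the limit: by Lemma \ref{semi-closure} the net $e_ixe_i$ converges in $E_1(\M)+E_2(\M)$, the unitaries $u_i$ converge (after a suitable compactness/extraction argument in the strong-$*$ topology on the unit ball of $\M$) to a unitary $u$, and $a_i=u_i^*e_ixe_i\to a\in E_1(\A)+E_2(\A)$; the uniform bound $\sup_i\|a_i^{-1}\|_\M<\infty$ forces $a^{-1}\in\M$, and since $a\in H_r(\A)$ is a limit of elements of the algebras $H_r(\A_{e_i})\subset H_r(\A)$ with invertible limit, a standard argument (e.g. using $\E(a)^{-1}$ and the inner-outer decomposition) upgrades $a^{-1}\in\M$ to $a^{-1}\in\A$. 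This yields $x=ua$ with the stated properties.

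For the inclusion ``$\subseteq$'' in (ii), it suffices to show $E_j(\A)\subset H_r(\A)$ for $j=1,2$; this follows from Lemma 4.5 in \cite{D} (which gives $E_j(\A)\subset H_{s_j}(\A)\subset H_r(\A)$ since $s_j>r$) together with the obvious inclusion $E_j(\A)\subset E_1(\M)+E_2(\M)$. Hence $E_1(\A)+E_2(\A)\subset\big(E_1(\M)+E_2(\M)\big)\cap H_r(\A)$.

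For the reverse inclusion in (ii), take $y\in\big(E_1(\M)+E_2(\M)\big)\cap H_r(\A)$. The idea is a standard trick: perturb $y$ to make it invertible. For a scalar $\lambda$ with $|\lambda|$ small, consider $y_\lambda=y+\lambda\cdot 1$ (using $\tau(1)<\infty$, so $1\in E_j(\M)$), which lies in $E_1(\M)+E_2(\M)$, belongs to $H_r(\A)$, and is invertible with $y_\lambda^{-1}\in\M$ for suitable $\lambda$ (or, if $y$ is not bounded, first split $y=y'+y''$ with $y''\in\M$ small and $y'$ invertible). Apply part (i): $y_\lambda=u_\lambda a_\lambda$ with $u_\lambda$ unitary in $\M$ and $a_\lambda^{-1}\in\A$. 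Now decompose $y_\lambda=y_\lambda^{(1)}+y_\lambda^{(2)}$ in $E_1(\M)+E_2(\M)$ and write $a_\lambda=u_\lambda^* y_\lambda^{(1)}+u_\lambda^* y_\lambda^{(2)}$; but a priori $u_\lambda^*y_\lambda^{(j)}$ need not be analytic. This is the subtle point, and I expect it to be the main obstacle: one resolves it by observing $y_\lambda^{(j)}=u_\lambda(a_\lambda c_\lambda^{(j)})$ where $c_\lambda^{(j)}=a_\lambda^{-1}u_\lambda^*y_\lambda^{(j)}\in E_j(\M)\cdot\M\subset E_j(\M)$ (using H\"older, $a_\lambda^{-1}\in\M$), so that $y_\lambda=u_\lambda a_\lambda(c_\lambda^{(1)}+c_\lambda^{(2)})$ with $c_\lambda^{(1)}+c_\lambda^{(2)}=1$; applying the conditional-expectation/Riesz projection that is bounded on each $E_j(\M)$ when $q_{E_j}<\infty$ (as recorded in the discussion preceding this lemma, after passing to the convexifications $E_j^{(n)}$ with $ns_j>1$ so the Boyd indices exceed $1$) to the analytic projection of each $c_\lambda^{(j)}$, and using $y_\lambda\in H_r(\A)$, one extracts a genuine splitting $y_\lambda=b_\lambda^{(1)}+b_\lambda^{(2)}$ with $b_\lambda^{(j)}=u_\lambda a_\lambda R(c_\lambda^{(j)})\in E_j(\A)$. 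Finally letting $\lambda\to0$ and using separability/order continuity of the $E_j$ to control the limit gives $y\in E_1(\A)+E_2(\A)$, completing the proof. The delicate bookkeeping is making the boundedness constants in the last step independent of $\lambda$, which is handled by the uniform estimates on $\|a_\lambda\|,\|a_\lambda^{-1}\|$ coming from (i) and the fixed norm of the Riesz projection on the convexified spaces.
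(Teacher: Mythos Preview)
Your proposal has substantive gaps in both parts.

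For (i), the hypothesis $\tau(1)<\infty$ means $\M$ is already finite, so the approximation by the net $\{e_i\}$ is superfluous (one may take $e_i=1$); more seriously, the compactness extraction of a limiting unitary and the upgrade ``$a^{-1}\in\M\Rightarrow a^{-1}\in\A$'' are asserted without justification. The paper proceeds quite differently. Since $E_1(\M)+E_2(\M)\subset L_r(\M)$, one invokes the Riesz--Szeg\H{o} factorization (Theorem~3.1 of \cite{BX}) directly to obtain $x=ua$ with $a\in H_r(\A)$ and $a^{-1}\in\A$; the remaining task is to show $a\in E_1(\A)+E_2(\A)$. For $r>1$ this is done by a duality/contradiction argument, using $(E_1(\A)+E_2(\A))^*=E_1^\times(\A)\cap E_2^\times(\A)$ together with a Saito-type annihilator identification. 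The case $r\le 1$ is then reduced to the Banach case by a convexification bootstrap: write $x=x_1x_2$ with $x_k$ built from $|x|^{1/2}$, observe $x_k\in E_1^{(2)}(\M)+E_2^{(2)}(\M)$, factor each $x_k$ by the previous step, and multiply using the product formula for Hardy spaces (Corollary~\ref{cor:product-hardy}); iterate.

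For (ii), your argument breaks at the decisive step. Applying (i) to $y_\lambda$ yields $y_\lambda=u_\lambda a_\lambda$ with $a_\lambda\in E_1(\A)+E_2(\A)$ and $a_\lambda^{-1}\in\A$, but $u_\lambda$ is merely a unitary in $\M$, not in $\A$; consequently $b_\lambda^{(j)}=u_\lambda a_\lambda R(c_\lambda^{(j)})$ has no reason to lie in $E_j(\A)$. Moreover, the Riesz projection $R$ is only known to be bounded on $E_j(\M)$ when $p_{E_j}>1$; passing to the convexification $E_j^{(n)}$ does not help, since $c_\lambda^{(j)}$ lives in $E_j(\M)$, not in $E_j^{(n)}(\M)$. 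The paper sidesteps all of this: one does \emph{not} factor $x$ (or a perturbation) but rather $|x|+1$, which is positive and invertible with inverse in $\M$. Writing $|x|=b(|x|+1)$ for a contraction $b\in\M$ (Lemma~1.1 in \cite{J}) and applying (i) gives $|x|+1=uh$ with $h\in E_1(\A)+E_2(\A)$ and $h^{-1}\in\A$. Then $xh^{-1}=vbu\in\M$, and since $x\in H_r(\A)$ and $h^{-1}\in\A$ one has $xh^{-1}\in H_r(\A)\cap\M=\A$. Hence $x=(xh^{-1})h\in\A\cdot\bigl(E_1(\A)+E_2(\A)\bigr)\subset E_1(\A)+E_2(\A)$, with no limiting procedure and no Riesz projection needed.
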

\begin{proof} (i) First assume that $r>1$. It follows that  $E_j$ admits an equivalent
symmetric norm (see \cite[Corollary 3.5]{DDS}), endowed with this new norm,  $E_j$ is a separable symmetric Banach function space on $(0,\alpha)$ and $r\le p_{E_j}\le q_{E_j}<\8$. By \eqref{boydindex}, $1< p_{E_j^\times}\le q_{E_j^\times}<\8\; (j=1,2)$. Using Corollary 1 in \cite{BM}, we obtain that
$$
E_j(\A)^*=E_j^\times(\A),\quad j=1,\;2.
$$
Hence,
\beq\label{eq:dual-sum-hardy}
\big(E_1(\A)+E_2(\A)\big)^*=E_1^\times(\A)\cap E_2^\times(\A).
\eeq
Let $x\in E_1(\M)+E_2(\M)$
 with $x^{-1}\in \M$. Then  $x\in L_{r}(\M)$ with
$x^{-1}\in \mathcal{M}$. Using Theorem 3.1 of
\cite{BX}, we find a unitary $u\in\M$ and $a\in
H_{r}(\A)$ such that $x=ua$ and $a^{-1}\in
\mathcal{A}$. If  $a\notin E_1(\A)+E_2(\A)$, by \eqref{eq:dual-sum-hardy},  there is an
 $b\in  E_1^\times(\A)\cap E_2^\times(\A)$ such that $\tau(b^*a)\neq0$  and $\tau(b^*d)=0$ for all $d\in E_1(\A)+E_2(\A)$. Choose $p$ such that $1<p<\min\{p_{E_1^\times},p_{E_2^\times}\}$. It is clear  that $b^*\in  L_p(\M)$ and $\tau(b^*d)=0$ for all $d\in \A$. Applying Proposition 2 in \cite{Sai}, we get that  $y^*$ belongs to the norm closure of $\A_0$ in $L_p(\M)$, where $\mathcal{A}_0=\mathcal{A}\cap\ker\mathcal{E}$. Hence, by Corollary 2.2 in \cite{BX}, we
 obtain that $\tau(y^*a)=\tau(\E(y^*a))=\tau(\E(y^*)\E(a))=0$. This is a contradiction,  so
  $ a\in E_1(\A)+E_2(\A)$.

Now assume that $\frac{1}{2}<r\leq1$. If $x=v|x|$ is the
polar decomposition of $x$, then $v\in\M$ is a
unitary. Let $x=v|x|^\frac{1}{2}|x|^\frac{1}{2}=x_1x_2,$
where $x_1=v|x|^\frac{1}{2}$ and $x_2=|x|^\frac{1}{2}$. It is clear that $x_k^{-1}\in\M$. Since $E_1(\M)+E_2(\M)=(E_1+E_2)(\M)$ with equivalent norms (see Theorem \ref{K-functional for noncommutative}),
\beq\label{eq:sum(2)}
x_1,\;x_2\in(E_1+E_2)^{(2)}(\M).
\eeq

On the other hand,  from Theorem \ref{K-functional for noncommutative} and its proof follows that
$$
\begin{array}{rl}
K_1\big(x_k, E_1^{(2)}(\M),E_2^{(2)}(\M)\big)
&=K_1\big(|x|^\frac{1}{2}, E_1^{(2)}(\M),E_2^{(2)}(\M)\big)\\
&\le A^{-1}K_1\big(\mu(x)^\frac{1}{2},E_1^{(2)},E_2^{(2)}\big)\\
&\le A^{-1}2^\frac{1}{2}K_1\big(\mu(x),E_1,E_2\big)^\frac{1}{2}\\
&\le A^{-1}(2B)^\frac{1}{2}K_1\big(x, E_1(\M),E_2(\M)\big)^\frac{1}{2}<\8,
\end{array}
$$
and so $x_k\in E_1^{(2)}(\M)+E_2^{(2)}(\M)\; (k=1,\;2)$. By the first case, we get a
factorization
 $$x_2=u_1 a_2$$
with $u_1\in\M$ a unitary, $a_2\in
E_1^{(2)}(\A)+E_2^{(2)}(\A)$ such that $a_2^{-1}\in\A$. Repeating this argument, we again obtain
a same factorization for $x_{1}u_1$:
 $$
 x_{1}u_1=ua_1
 $$
with $u\in\M$ a unitary, $a_1\in
E_1^{(2)}(\A)+E_2^{(2)}(\A)$ such that $a_1^{-1}\in\A$. Hence, we get a
factorization:
 $$x=u a_1 a_2.$$
Set $a=a_1a_2$. By \eqref{eq:sum(2)} and \eqref{saito}, we get $a\in E_1(\A)+E_2(\A)$.

For the case  $\frac{1}{4}<r\le\frac{1}{2}$, we repeat the previous
argument to complete the proof. Using induction, we obtain  the desired result for the general case.

(ii) By Lemma 4.5 in \cite{D},
$$
 E_j(\M)\subset L_r(\M),\quad j=1,\;2.
 $$
It is clear that $E_1(\A)+E_2(\A)\subset\big(E_1(\M)+E_2(\M)\big)\cap H_r(\A)$.

Conversely, if  $x\in H_{r}(\A)\cap \big(E_1(\M)+E_2(\M)\big)$.
 Let $x=v|x|$ be the polar decomposition of $x$.  By Lemma 1.1 in \cite{Ju}, there exists a contraction $b\in\mathcal{M}$
such that $|x|=b(|x|+1)$. On the other hand,  $|x|+1\in E_1(\M)+E_2(\M)$ and
$(|x|+1)^{-1}\in \M$. Using (i), we obtain  a unitary $u\in\M$  and $h\in E_1(\A)+E_2(\A)$  such that
$|x|+1=uh$ and $h^{-1}\in\mathcal{A}$.
Hence,
$$
vbu=xh^{-1}\in H_r(\mathcal{A})\cap \mathcal{M}=\mathcal{A}.
$$
So, it follows that $x\in E_1(\A)+E_2(\A)$. This finishes the proof.
\end{proof}

\begin{lemma}\label{lem:connection-real-complex} Let $E_j$ be a separable symmetric quasi Banach  function space on $(0,\alpha)$ which is
$s_j$-convex for some $0 < s_j < \8\; (j=1,2)$. If $0<\theta<1$ and $\alpha<\8$, then
$$
(E_1(\M),E_2(\M))_\theta\subset(E_1(\M),E_2(\M))_{\theta,\8}
$$
and
$$
(E_1(\A),E_2(\A))_\theta\subset(E_1(\A),E_2(\A))_{\theta,\8}.
$$
\end{lemma}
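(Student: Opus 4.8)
The plan is to reduce both inclusions to the single general fact that for any compatible couple $(X_0,X_1)$ of quasi-Banach spaces one has a continuous inclusion $(X_0,X_1)_\theta\subset(X_0,X_1)_{\theta,\infty}$, and then to verify that this general fact survives the passage from Banach to quasi-Banach spaces. For Banach couples this is classical (see \cite{BL}); the content here is only to check that nothing breaks under the quasi-triangle inequality, which is where the $s_j$-convexity hypotheses enter. So the first inclusion is really a lemma about abstract quasi-Banach couples, and the second is an immediate application of it with $X_j=E_j(\A)$, since $E_j(\A)$ is a quasi-Banach space (a closed subspace of the quasi-Banach space $E_j(\M)$, which is itself quasi-Banach because $E_j$ is $s_j$-convex, hence continuously embedded in $L_0$ by the Remark following Proposition~\ref{pro:calderon}).

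For the abstract inclusion I would argue directly with the defining functionals. Fix $x\in X_0\cap X_1$ and let $f\in\mathcal F_0(X_0,X_1)$ with $f(\theta)=x$. The key step is a Poisson/three-lines type estimate: one wants to bound the $K$-functional $K_t(x,X_0,X_1)$, for each fixed $t>0$, by a suitable geometric-mean combination of $\sup_s\|f(is)\|_{X_0}$ and $\sup_s\|f(1+is)\|_{X_1}$, so that after taking the infimum over $f$ one gets $K_t(x,X_0,X_1)\le C\,t^\theta\,\|x\|_{(X_0,X_1)_\theta}$, i.e. $t^{-\theta}K_t(x)\le C\|x\|_{(X_0,X_1)_\theta}$ uniformly in $t$, which is exactly membership in $(X_0,X_1)_{\theta,\infty}$ with control of the norm. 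Concretely, for a given $t$ one chooses a conformal parameter $\lambda\in(0,\infty)$ and writes $x=f(\theta)$; using the analyticity of $f$ and an averaging of $f$ over vertical lines one produces a decomposition $x=x_0+x_1$ with $\|x_0\|_{X_0}$ controlled by $\sup_s\|f(is)\|_{X_0}$ times a power of $\lambda$ and $\|x_1\|_{X_1}$ controlled by $\sup_s\|f(1+is)\|_{X_1}$ times a complementary power of $\lambda$; optimizing $\lambda$ against $t$ gives the bound. The only place the quasi-norm intervenes is in the triangle-type step where the averaged decomposition is reassembled; there one uses that the $X_j$ admit $p_j$-norms (Aoki--Rolewicz), so that finite/integral sums are controlled in the $p_j$-norm and hence, up to a constant depending only on the modulus of concavity of the quasi-norm, in $\|\cdot\|_{X_j}$. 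Completing by density of $X_0\cap X_1$ then gives the inclusion on all of $(X_0,X_1)_\theta$.

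The main obstacle I anticipate is precisely this reassembly step: in the Banach case one uses the Poisson kernel to write $x=\int(\cdots)\,ds$ and splits the integral over the two boundary lines, and the ordinary triangle inequality makes the norm estimate immediate; in the quasi-Banach case an integral is a limit of Riemann sums and the quasi-triangle inequality does not pass to infinite sums without loss. The remedy is standard but must be stated carefully: replace the norm estimate by a $p_j$-th power estimate valid for all finite sums (which is genuinely additive), pass to the limit, and absorb the resulting fixed constant into $C$. Once this is set up, both displayed inclusions follow, with the constants depending only on $\theta$ and on the quasi-norm constants (equivalently, the $p_j$'s, hence ultimately the $s_j$'s) of $E_1$ and $E_2$, and in particular not on $\tau(1)=\alpha$; I would record that uniformity explicitly since later results in the paper rely on it. For the Hardy-space inclusion one additionally notes that the interpolation functors are applied to the same concrete couple, so no compatibility issue arises beyond knowing that $E_1(\A)\cap E_2(\A)$ is dense in $(E_1(\A),E_2(\A))_\theta$, which holds by definition of the complex method.
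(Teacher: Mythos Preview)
There is a genuine gap in your approach. The abstract inclusion $(X_0,X_1)_\theta\subset(X_0,X_1)_{\theta,\infty}$ is \emph{not} a general fact for quasi-Banach couples, and the obstruction is deeper than the Aoki--Rolewicz reassembly of sums. What the three-lines/Poisson estimate actually needs is that $z\mapsto\|f(z)\|$ (or $\log\|f(z)\|$) be subharmonic for analytic $f$, i.e.\ that the quasi-norm be plurisubharmonic; this can fail for quasi-Banach spaces without extra structure (see \cite{Pe} and \cite{CPP}, both cited in the paper). Passing to an equivalent $p$-norm controls finite sums, but it does not by itself produce a maximum-modulus principle for vector-valued analytic functions---which is exactly the step you gloss over when you write ``using the analyticity of $f$ and an averaging\ldots one produces a decomposition.'' Your anticipated obstacle (infinite sums under a quasi-triangle inequality) is real but secondary; the primary obstacle is getting the Poisson bound at all.

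The paper's proof works by exploiting the \emph{lattice} structure rather than attempting an abstract argument. It first shows that the $K$-functional $K_t(\cdot,E_1,E_2)$ on $E_1+E_2$ is equivalent to a symmetric quasi-norm $\|\cdot\|_E$ which is $s$-convex with $s=\min\{s_1,s_2\}$ (this is where $s_j$-convexity and separability are used). It then invokes Lemma~4.2 of \cite{X}, a structural result about symmetric operator spaces, to conclude that $\|\cdot\|_E$ is plurisubharmonic on $E(\M)$. Plurisubharmonicity gives the maximum-modulus estimate directly, which is Peetre's condition~(h) in \cite{Pe}, and the inclusion then follows from Peetre's general framework. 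So the $s_j$-convexity hypothesis is doing substantially more than you credit it with: it is what guarantees---via the lattice route and Xu's lemma---the analytic convexity that your direct Poisson argument would require but cannot obtain from the quasi-Banach axioms alone.
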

\begin{proof} Let $(E,\;K_t(\cdot))=(E_1+E_2,\;K_t(\cdot, E_1,E_2))$. Since  $E_j$ is
separable $(j=1,\;2)$, it is clear that $(E,\;K_t(\cdot))$ is separable.  We claim that  there is an equivalent  quasi norm $\|\cdot\|_E$ on $E$ which is equivalent to $K_t(\cdot)$   so that $E$, endowed with the new quasi norm $\|\cdot\|_E$, is a symmetric quasi Banach function space.
Indeed, first assume that $E_1$ and $E_2$ are symmetric  Banach function spaces, then  $E_1,\;E_2,\; E\subset L_1(0,\alpha)+ L_\8(0,\alpha)$. Let $x,\;y\in E$ and $x\preccurlyeq y$. If $y=y_1+y_2,\; y_1\in E_1,\; y_2\in E_2$,   then by Proposition 3 in \cite{LS} (see also Proposition 4.10 in \cite{DDP3}), there there exist $x_1,x_2\in L_1(0,\alpha)+ L_\8(0,\alpha)$ such that $x=x_1+x_2$ and $x_j\preccurlyeq y_j$, $j=1,\;2$. On the other hand, $E_1$ and $E_2$ are fully symmetric  Banach function spaces, it follows that $x_j\in E_j$, $j=1,\;2$. Hence
$$
K_t(x)\le \|x_1\|_{E_1} +t\|x_2\|_{E_2}\le \|y_1\|_{E_1} +t\|y_2\|_{E_2}.
$$
so that $K_t(x)\le K_t(y)$, i.e., $(E,\;K_t(\cdot))$ is symmetric Banach function space.

Now we prove the claim for the general case. We chose $n\in\mathbb{N}$ such that $ns_j>1$, then  $E^{(n)}$ can be renormed as a symmetric Banach function  space, $(j=1,\;2)$. By the first case,
$K_{t}(\cdot, E_1^{(n)},E_2^{(n)})$ is a symmetric  norm on $E_1^{(n)}+E_2^{(n)}$, hence $K_{t^{\frac{1}{n}}}(|\cdot|^\frac{1}{n}, E_1^{(n)},E_2^{(n)})^n$ is a symmetric quasi norm on $E_1+E_2$. From the proof of Theorem \ref{K-functional for noncommutative}, we know that  $K_{t^{\frac{1}{n}}}(|\cdot|^\frac{1}{n}, E_1^{(n)},E_2^{(n)})^n$ and $K_t(\cdot, E_1,E_2)$ are equivalent. The claim is  proved.

 Since  $E_1$ and $E_2$ are separable and
$s$-convex ($s=\min\{s_1,s_2\}$),  $(E,\;K_t(\cdot))$ is separable and $s$-convex.  Therefore,  $(E,\|\cdot\|_E)$  is separable and $s$-convex. By Lemma \ref{lem:analytic convex},  $(E(\M),\|\cdot\|_E)$ is  analytically convex. Hence,  $(E_1(\M)+E_2(\M),\;K_t(\cdot, E_1(\M),E_2(\M))$ is  analytically convex. By \eqref{eq:kalton}, the couple $(E_1(\M), E_2(\M))$ satisfies the condition (h) in \cite{Pe} (see \cite[$\S$5]{Pe} or \cite[p. 21]{CPP}). From the result in $\S$5 in \cite{Pe}, it follows that
$(E_1(\M),E_2(\M))_\theta\subset(E_1(\M),E_2(\M))_{\theta,\8}$. Similarly,
$$
(E_1(\A),E_2(\A))_\theta\subset(E_1(\A),E_2(\A))_{\theta,\8}.
$$
\end{proof}

\begin{theorem}\label{pro:k-funtion1} Let $E_j$ be a separable symmetric quasi Banach function space on $(0,\alpha)$ which is
$s_j$-convex for some $0 < s_j < \8\; (j=1,\;2)$. If $q_{E_j}<\8\; (j=1,\;2)$, then
that for all $x\in E_1(\A)+E_2(\A)$ and  all $t>0$
 $$
 K_t(x, E(\A),E_2(\A))\le C K_t(x, E_1(\M),E_2(\M)),
$$
where  $C$ is independent of $\alpha$.
\end{theorem}
\begin{proof} First assume $\alpha<\8$. Let $r=\min\{s_1,s_2\}$. If $r>1$, then  $1<r\le p_{E_j} \le q_{E_j}<\8$ ($j=1,\;2$). Hence,  by \eqref{k-funtion}, the result holds.

Suppose that $\frac{1}{2}<r\leq1$.  Let $K_t(x, E_1(\M),E_2(\M))<1$. Put $w=|x|+\varepsilon$ (where $\varepsilon\|1\|_{E_1}<1$). By Lemma 1.1 in \cite{Ju}, there exists a contraction $v\in \M$ such that $x = vw$. It is clear that $K_t(w, E_1(\M),E_2(\M))<K$, where $K>0$ depends only on the quasi norm constant of $\|\cdot\|_{E_1}$. Since $w^\frac{1}{2}\in E_1^{(2)}(\M)+E_2^{(2)}(\M)$ and $w^{-\frac{1}{2}}\in\M$,
by (i) of Lemma \ref{lem:sum-hardy-space}, there exist  a unitary $u\in M $ and $a\in E_1^{(2)}(\A)+E_2^{(2)}(\A)$ such that $w^{\frac{1}{2}}=ua$ and $a^{-1}\in\A$. It follows that  $b=xa^{-1}\in H_r(\A)$. Noticing that $b=vwa^{-1}=vww^{-\frac{1}{2}}u=vw^{\frac{1}{2}}u\in E_1^{(2)}(\M)+E_2^{(2)}(\M)$, by (ii) of Lemma \ref{lem:sum-hardy-space}, we get $b=xa^{-1}\in E_1^{(2)}(\A)+E_2^{(2)}(\A)$. Applying Theorem \ref{K-functional for noncommutative} we get
\be
\begin{array}{rl}
K_{t^\frac{1}{2}}(a, E_1^{(2)}(\M),E_2^{(2)}(\M))&\le A^{-1}K_{t^\frac{1}{2}}(\mu(a), E_1^{(2)}+E_2^{(2)})\\
&=A^{-1}K_{t^\frac{1}{2}}(\mu(w)^\frac{1}{2}, E_1^{(2)}+E_2^{(2)})\\
&\le A^{-1}\Big(2K_t(\mu(w), E_1,E_2)\Big)^\frac{1}{2}\\
&\le A^{-1}(2B)^\frac{1}{2}\Big(K_t(w, E_1(\M),E_2(\M))\Big)^\frac{1}{2}<C'
\end{array}
\ee
and
$$
K_{t^\frac{1}{2}}(b, E_1^{(2)}(\M),E_2^{(2)}(\M))\le K_{t^\frac{1}{2}}(w^{\frac{1}{2}}, E_1^{(2)}(\M),E_2^{(2)}(\M))<C',
$$
where  $C'>0$ is independent of $\alpha$.
Hence by \eqref{k-funtion},
$$
K_{t^\frac{1}{2}}(a, E_1^{(2)}(\A),E_2^{(2)}(\A))<C'C\qquad\mbox{and}\qquad K_{t^\frac{1}{2}}(b, E_1^{(2)}(\A),E_2^{(2)}(\A))<C'C.
$$
Therefore, there are $ y_1,y_2\in E_1^{(2)}(\A)$ and $z_1,z_2\in E_2^{(2)}(\A)$ such that
\beq\label{k-fun1}
b=y_1+z_1,\quad a=y_2+z_2,\quad\|y_i\|_{E_1^{(2)}}+t^{\frac{1}{2}}\|z_i\|_{E_2^{(2)}}<C'C,\quad i=1,\;2.
\eeq
On the other hand, we have $x=ba=(y_1+z_1)(y_2+z_2)=y_1y_2+z_1z_2+y_1z_2+z_1y_2$. Hence
\beq\label{k-fun2}\begin{array}{rl}
K_{t}(x, E_1(\A)+E_2(\A))&\le K' K_{t}(y_1y_2+z_1z_2, E_1(\A)+E_2(\A))\\
&\qquad+K'K_{t}(y_1z_2+z_1y_2, E_1(\A)+E_2(\A)),
\end{array}
\eeq
where $K'>0$ depends only on the quasi norm constants of $\|\cdot\|_{E_1}$ and $\|\cdot\|_{E_2}$.
By \eqref{k-fun1} and Proposition \ref{pro:noncommutative-ponitwise product}, we get
\beq\label{k-fun3}
 K_{t}(y_1y_2+z_1z_2, E_1(\A)+E_2(\A))\le \|y_1y_2\|_{E_1}+t\|z_1z_2\|_{E_2}
 <C''(C'C)^2,
\eeq
where  $C''>0$ is independent of $\alpha$.
Using Corollary \ref{cor:product-hardy} and  \eqref{k-fun1}, we obtain that
 \beq\label{k-fun4}
 \begin{array}{rl}
 \|y_1z_2+z_1y_2\|_{(E_1(\A),E_2(\A))_\frac{1}{2}}&\le B'(\|y_1\|_{E_1^{(2)}}\|z_2\|_{E_2^{(2)}}+\|z_1\|_{E_1^{(2)}}\|y_2\|_{E_2^{(2)}})\\
 &<2B'(C'C)^2t^{-\frac{1}{2}}.
\end{array}
\eeq
where  $B'>0$ is independent of $\alpha$. Applying
 Lemma \ref{lem:connection-real-complex} and \eqref{k-fun4}, we obtain that
$$
\|y_1z_2+z_1y_2\|_{(E_1(\A),E_2(\A))_{\frac{1}{2},\8}}\le 2B'A'(C'C)^2t^{-\frac{1}{2}},
$$
where  $A'>0$ depends on the constant of $(E_1(\A),E_2(\A))_\theta\subset(E_1(\A),E_2(\A))_{\theta,\8}$.
This implies
\beq\label{k-fun5}
 K_{t}(y_1z_2+z_1y_2, E_1(\A),E_2(\A))\le 2B'A'(C'C)^2.
\eeq
Combining \eqref{k-fun2}, \eqref{k-fun3} and \eqref{k-fun5}, we find that
$$
K_{t}(x, E_1(\A),E_2(\A))\le L,
$$
where $L>0$ is independent of $\alpha$. The remainder of the proof can be done the same way as in the proof of Theorem \ref{K-functional for noncommutative}.

Finally, suppose $\alpha=\8$. By the finite case,  for all $i\in I$ and $t>0$,
$$
 K_t(x, E_1(\A_{e_i}),E_2(\A_{e_i}))\le L K_t(x, E_1(\M_{e_i}),E_2(\M_{e_i})),\qquad x\in E_1(\A_{e_i})+E_2(\A_{e_i}).
 $$
Let $x\in  E_1(\A)+E_2(\A)$. Then there exist  $x_1\in E_1(\A)$ and $x_2\in E_2(\A)$ such that
$x=x_1+x_2$. Hence,
$$
\begin{array}{rl}
 &K_t(x,  E_1(\A),E_2(\A)) \\
 &\qquad \le C[ K_t( e_ix e_i,  E_1(\A),E_2(\A)))+ K_t(x- e_ixe_i,  E_1(\A),E_2(\A))]\\
 &\qquad\le C[K_t( e_ix e_i,  E_1(\A_{e_i}),E_2(\A_{e_i}))+ \|x_1-e_ix_1e_i\|_{E_1}+ t\|x_2-e_ix_2e_i\|_{E_2}]\\
& \qquad\le CLK_t( e_ix e_i, E_1(\M_{e_i}),E_2(\M_{e_i}))+ C[\|x_1-e_ix_1e_i\|_{E_1}+ t\|x_2-e_ix_2e_i\|_{E_2}]\\
  &\qquad\le CLK_t( x, E_1(\M),E_2(\M))+ C[\|x_1-e_ix_1e_i\|_{E_1}+ t\|x_2-e_ix_2e_i\|_{E_2}].\\
\end{array}
$$
On the other hand, by Lemma \ref{semi-closure}, $\|x_1-e_ix_1e_i\|_{E_1}\rightarrow0$ and $\|x_2-e_ix_2e_i\|_{E_2}\rightarrow0$. Therefore,
 $$
 K_t(x, E(\A),E_2(\A))\le CL K_t(x, E_1(\M),E_2(\M)).
$$

\end{proof}

\begin{corollary}\label{cor:interpolation-E(A)} Let $E_j$ be a separable symmetric quasi Banach function space on $(0,\alpha)$ which is
$s_j$-convex for some $0 < s_j < \8$ and $q_{E_j}<\8\;(j=1,\;2)$. If  $0<\theta<1$,
$0<p\le\8$ and $E=(E_1,E_2)_{\theta,p}$, then $E(\A)=(E_1(\A),E_2(\A))_{\theta,p}$.

\end{corollary}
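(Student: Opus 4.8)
The plan is to reduce the statement to the $K$-functional estimate of Theorem~\ref{pro:k-funtion1} together with the identification $E(\M)=(E_1(\M),E_2(\M))_{\theta,p}$ provided by Corollary~\ref{cor:interpolation-E(M)}. Recall that, by definition, $(E_1(\A),E_2(\A))_{\theta,p}$ is the subspace of $E_1(\A)+E_2(\A)$ consisting of those $x$ for which $\big(\int_0^\8(t^{-\theta}K_t(x,E_1(\A),E_2(\A)))^p\frac{dt}{t}\big)^{1/p}<\8$ (with the usual modification for $p=\8$), and similarly for the couple $(E_1(\M),E_2(\M))$. Hence it suffices to (a) compare the two $K$-functionals on $E_1(\A)+E_2(\A)$, and (b) identify the resulting operator space with $E(\A)$.

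For (a): since $E_j(\A)$ carries the norm induced from $E_j(\M)$, every decomposition $x=x_1+x_2$ with $x_j\in E_j(\A)$ is also a decomposition in $E_j(\M)$, so $K_t(x,E_1(\M),E_2(\M))\le K_t(x,E_1(\A),E_2(\A))$ for all $t>0$ and all $x\in E_1(\A)+E_2(\A)$. The reverse inequality $K_t(x,E_1(\A),E_2(\A))\le LK_t(x,E_1(\M),E_2(\M))$, with $L$ independent of $\tau(1)$, is exactly Theorem~\ref{pro:k-funtion1}. Thus the two $K$-functionals are equivalent on $E_1(\A)+E_2(\A)$, and, combining this with Corollary~\ref{cor:interpolation-E(M)}, we obtain, with equivalence of quasi-norms,
$$
(E_1(\A),E_2(\A))_{\theta,p}=\big(E_1(\A)+E_2(\A)\big)\cap(E_1(\M),E_2(\M))_{\theta,p}=\big(E_1(\A)+E_2(\A)\big)\cap E(\M),
$$
where in the last step we use $E(\M)\subset(E_1+E_2)(\M)=E_1(\M)+E_2(\M)$, which follows from the continuous embedding $E=(E_1,E_2)_{\theta,p}\hookrightarrow E_1+E_2$.

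For (b): I claim $\big(E_1(\A)+E_2(\A)\big)\cap E(\M)=E(\A)$. Note first that $E$ is $s$-convex for $s=\min\{s_1,s_2,p\}$, has $q_E<\8$, and (when $p<\8$) is separable with order-continuous norm, so the Saito-type characterizations apply to it. In the finite case $\tau(1)<\8$, fix $0<r<\min\{s_1,s_2\}$ small enough that $E$ is also $r$-convex; then \eqref{saito} applied with exponent $r$ gives $E(\A)=H_r(\A)\cap E(\M)$, while Lemma~\ref{lem:sum-hardy-space}(ii) gives $E_1(\A)+E_2(\A)=\big(E_1(\M)+E_2(\M)\big)\cap H_r(\A)$. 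Intersecting the latter with $E(\M)\subset E_1(\M)+E_2(\M)$ yields $\big(E_1(\A)+E_2(\A)\big)\cap E(\M)=H_r(\A)\cap E(\M)=E(\A)$, as claimed. In the semifinite case $\tau(1)=\8$, one argues in the same way, using Lemma~\ref{lem:saito-type} in place of \eqref{saito} and the inclusions $E_j(\A)\subset H_p(\A)+H_q(\A)$ for suitable $0<p<q<\8$ (coming from \eqref{eq:inclustion-semi-hardy}); alternatively, since $L$ in Theorem~\ref{pro:k-funtion1} does not depend on $\tau(1)$, one transfers the identity from the finite corners $\M_{e_i}$, using Lemma~\ref{semi-closure} so that $\|x-e_ixe_i\|_E\to0$ for $x\in E(\A)$ and $e_ixe_i\in E(\A_{e_i})\subset E(\A)$.

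I expect the real work to be in part (b), especially in the semifinite case: one must either secure a semifinite analogue of Lemma~\ref{lem:sum-hardy-space}(ii) for the sum space $E_1(\A)+E_2(\A)$, or run the $\{e_i\}$-reduction carefully, making sure all quasi-norm constants stay uniform in $i$ (which is precisely the purpose of the $\tau(1)$-independence in Theorems~\ref{K-functional for noncommutative Lp} and~\ref{pro:k-funtion1}) and that the Cauchy nets converge in the right topology. The bookkeeping on the convexity exponent of $E$, its order continuity, and the applicability of the quasi-Banach (rather than Banach) forms of the cited subdiagonal-algebra results is routine but needs to be checked. Finally the endpoint $p=\8$ requires separate attention, since then $E$ is no longer separable: here one should instead observe that $(E_1(\A),E_2(\A))_{\theta,\8}$ is a closed subspace of $E(\M)$ containing the dense set $\A\cap E(\M)$ and identify it with $E(\A)$ by a direct argument, or deduce the case $p=\8$ from the cases $p<\8$ by reiteration.
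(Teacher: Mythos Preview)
Your proposal is correct and is exactly the intended argument: the paper states this result as an immediate corollary with no proof, the point being that the $K$-functional comparison of Theorem~\ref{pro:k-funtion1} combined with Corollary~\ref{cor:interpolation-E(M)} yields the interpolation identity, and your part~(b) merely spells out the identification $(E_1(\A)+E_2(\A))\cap E(\M)=E(\A)$ via \eqref{saito}/Lemma~\ref{lem:sum-hardy-space}(ii) (finite case) and Lemma~\ref{lem:saito-type} (semifinite case) that the paper leaves implicit. Your caveats about the convexity exponent of $E$ and the endpoint $p=\infty$ are legitimate bookkeeping points, but they do not alter the approach.
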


\subsection*{Acknowledgement} We thank the  referees for very useful comments, which improved the paper. We also thank the  referee for  suggesting (pointing out) the proofs of Proposition \ref{pro:calderon} and \ref{pro:product}.  \\
 T.N. Bekjan is partially supported by NSFC grant  No.11771372, M.N. Ospanov is partially supported by  project AP05131557 of the Science Committee of Ministry of Education and Science of the Republic of Kazakhstan.

\end{document}